\newtheorem{theorem}{Theorem}[section]
\newtheorem{corollary}[theorem]{Corollary}
\newtheorem{lemma}[theorem]{Lemma}
\newtheorem{proposition}[theorem]{Proposition}
\newtheorem{Definition}[theorem]{Definition}
\newtheorem{Example}[theorem]{Example}
\newtheorem{Remark}[theorem]{Remark}
\newenvironment{remark}{\begin{Remark}\begin{em}}{\end{em}\end{Remark}}
\DeclareMathOperator{\tr}{tr}
\address{Miran Jeong \\ Department of Mathematics, Chungbuk National University, Cheongju 28644, Korea}
\email{jmr4006@chungbuk.ac.kr}
\address{Sejong Kim \\ Department of Mathematics, Chungbuk National University, Cheongju 28644, Korea}
\email{skim@chungbuk.ac.kr}
\begin{document}

\title[Weak log-majorization and inequalities of power means]{Weak log-majorization and inequalities of power means}

\author{Miran Jeong and Sejong Kim}

\maketitle

\begin{abstract}
As non-commutative versions of the quasi-arithmetic mean, we consider the Lim-P\'{a}lfia's power mean, R\'{e}nyi right mean and  R\'{e}nyi power means.
We prove that the Lim-P\'{a}lfia's power mean of order $t \in [-1,0)$ is weakly log-majorized by the log-Euclidean mean and fulfills the Ando-Hiai inequality. We establish the log-majorization relationship between the R\'{e}nyi relative entropy and the product of square roots of given variables. Furthermore, we show the norm inequalities among power means and provide the boundedness of R\'{e}nyi power mean in terms of the quasi-arithmetic mean.
\end{abstract}

\medskip
\noindent \textit{2020 Mathematics Subject Classification}. 15A45, 15B48

\noindent \textit{Keywords and phrases.} Log-majorization, Cartan mean, log-Euclidean mean, Lim-P\'{a}lfia's power mean, R\'{e}nyi right mean, R\'{e}nyi power mean

\section{Introduction}

Throughout the paper, $\mathbb{C}_{m \times m}$ is the set of all $m \times m$ complex matrices,
$\mathbb{H}_m$ is the real vector space of $m \times m$ Hermitian matrices, and $\mathbb{P}_m \subset \mathbb{H}_m$ is the open convex cone of $m \times m$ positive definite matrices.
For $A, B \in \mathbb{H}_m$, the Loewner order $A \geq (>) B$ means that $A - B$ is positive semi-definite (resp. positive definite).
We denote by $s(X)$ the $m$-tuple of all singular values of a complex matrix $X$, and denote by $\lambda(X)$ the $m$-tuple of all eigenvalues of a Hermitian matrix $X$ in decreasing order: $\lambda_{1}(X) \geq \lambda_{2}(X) \geq \cdots \geq \lambda_{m}(X)$.

Let $x, y$ be two $m$-tuples of positive real numbers. We denote by $x^{\downarrow} = (x_{1}^{\downarrow}, \dots, x_{m}^{\downarrow})$ the rearrangement of $x$ in decreasing order.
The notation $x \prec_{\log} y$ represents that $x$ is \emph{log-majorized} by $y$, that is,
\begin{equation} \label{eqn:maj_eqn}
\prod_{i=1}^k x^{\downarrow}_i \leq \prod_{i=1}^k y^{\downarrow}_i
\end{equation}
for $1 \leq k \leq m-1$ and the equality holds for $k = m$.
We say that $x$ is \emph{weakly log-majorized} by $y$, denoted by $x \prec_{w \log} y$, if \eqref{eqn:maj_eqn} is true for $k = 1, 2, \dots, m$.
For simplicity, given $A, B \in \mathbb{P}_{m}$, we write $A \prec_{\log} B$ if $\lambda(A) \prec_{\log} \lambda(B)$, and $A \prec_{w \log} B$ if $\lambda(A) \prec_{w \log} \lambda(B)$.

For given $A_{1}, \dots, A_{n} \in \mathbb{P}_m$ the \emph{quasi-arithmetic mean} (\emph{generalized} or \emph{power mean}) of order $t (\neq 0) \in \mathbb{R}$ is defined by
\begin{displaymath}
\mathcal{Q}_{t}(\omega; A_1, \dots, A_n) := \left( \sum_{j=1}^{n} w_{j} A_{j}^{t} \right)^{\frac{1}{t}}
\end{displaymath}
where $\omega = (w_{1}, \dots, w_{n})$ is a positive probability vector. Note that
\begin{displaymath}
\lim_{t \to 0} \mathcal{Q}_{t}(\omega; A_1, \dots, A_n) = \exp \left( \sum_{j=1}^{n} w_{j} \log A_{j} \right),
\end{displaymath}
where the right-hand side is called the \emph{log-Euclidean mean} of $A_{1}, \dots, A_{n}$. Log-majorization properties and operator inequalities of the quasi-arithmetic mean have been studied \cite{BLY, DDF, LY13}. As non-commutative versions of the quasi-arithmetic mean, we investigate in this paper the Lim-P\'{a}lfia's power mean, R\'{e}nyi right mean and R\'{e}nyi power mean.

\noindent (I) The \emph{Lim-P\'{a}lfia's power mean} $P_{t}(\omega; A_1, \dots, A_n)$ of order $t \in (0,1]$ is defined as the unique positive definite solution of
\begin{displaymath}
X = \sum_{i=1}^{n} w_{i} (X \#_{t} A_{i}),
\end{displaymath}
where $A \#_{t} B = A^{1/2} (A^{-1/2} B A^{-1/2})^{t} A^{1/2}$ is known as the weighted geometric mean of $A, B \in \mathbb{P}_{m}$.
For $t \in [-1,0)$ we define $P_{t}(\omega; A_1, \dots, A_n) = P_{-t}(\omega; A_1^{-1}, \dots, A_n^{-1})^{-1}$. See \cite{LP12} for more information.
% The remarkable consequence is that the Lim-P\'{a}lfia's power mean $P_{t}$ converges monotonically to the Cartan mean $\Lambda$ as $t \to 0$, where the Cartan mean $\Lambda$ is the least squares mean for the Riemannian trace metric. See \cite{LL14, LP12}.
We show in Section 3 that the sequence $P_{t}(\omega; A_1^{p}, \dots, A_n^{p})^{1/p}$ for $t \in [-1,0)$ is weakly log-majorized by the log-Euclidean mean for any $p > 0$:
\begin{displaymath}
P_{t}(\omega; A_1^{p}, \dots, A_n^{p})^{1/p} \prec_{w \log} \exp \left( \sum_{j=1}^{n} w_{j} \log A_{j} \right),
\end{displaymath}
and the power mean $P_{t}$ satisfies the Ando-Hiai inequality: $P_{t}(\omega; A_1, \dots, A_n) \leq I$ implies $P_{t}(\omega; A_1^{p}, \dots, A_n^{p})^{1/p} \leq I$. This provides an affirmative answer for the monotone convergence of Lim-P\'{a}lfia's power means in terms of the weak log-majorization, but it is an open question:
\begin{displaymath}
P_{t}(\omega; A_1^{p}, \dots, A_n^{p})^{1/p} \nearrow_{\prec_{w \log}} \exp \left( \sum_{j=1}^{n} w_{j} \log A_{j} \right) \quad \textrm{as} \quad p \searrow 0.
\end{displaymath}

\noindent (II) Recently, a new barycenter minimizing the weighted sum of quantum divergences, called the \emph{$t$-$z$ R\'{e}nyi right mean}, has been introduced in \cite{DLVV}. Indeed, for $0 < t \leq z < 1$
\begin{displaymath}
\Omega_{t,z}(\omega; A_1, \dots, A_n) := \underset{X \in \mathbb{P}_{m}}{\arg \min} \sum_{j=1}^{n} w_{j} \Phi_{t,z}(A_{j}, X),
\end{displaymath}
where $\displaystyle \Phi_{t,z}(A, B) = \tr ((1-t) A + t B) - \tr \left( A^{\frac{1-t}{2z}} B^{\frac{t}{z}} A^{\frac{1-t}{2z}} \right)^{z}$ is the $t$-$z$ Bures-Wasserstein quantum divergence of $A, B \in \mathbb{P}_{m}$. Here, $Q_{t,z}(A, B) = \left( A^{\frac{1-t}{2z}} B^{\frac{t}{z}} A^{\frac{1-t}{2z}} \right)^{z}$ is known as the $t$-$z$ R\'{e}nyi relative entropy of $A, B$. The $t$-$z$ R\'{e}nyi right mean coincides with the unique positive definite solution of the equation
\begin{displaymath}
X = \sum_{j=1}^{n} w_{j} \left( X^{\frac{t}{2z}} A_{j}^{\frac{1-t}{z}} X^{\frac{t}{2z}} \right)^{z},
\end{displaymath}
which obtained by vanishing the gradient of objective function. For $t = z = 1/2$, the $t$-$z$  R\'{e}nyi right mean $\Omega_{t,z}$ coincides with the Wasserstein mean: see \cite{AC, ABCM, BJL19, HK19} for more information. We show in Section 4 the log-majorization relationship between the $t$-$z$ R\'{e}nyi relative entropy $Q_{t,z}(A, B)$ and $A^{1/2} B^{1/2}$, and establish norm inequalities among the power means.

\noindent (III) Dumitru and Franco \cite{DF} have defined the \emph{R\'{e}nyi power mean} $\mathcal{R}_{t,z}(\omega; A_1, \dots, A_n)$ as the unique positive definite solution of the equation
\begin{displaymath}
X = \sum_{j=1}^{n} w_{j} \left( A_{j}^{\frac{1-t}{2z}} X^{\frac{t}{z}} A_{j}^{\frac{1-t}{2z}} \right)^{z},
\end{displaymath}
and proved the norm inequality between $\mathcal{R}_{t,z}$ and $\mathcal{Q}_{1-t}$ with respect to the $p$-norm for $p \geq 2$.
Note that for commuting variables
\begin{displaymath}
\mathcal{R}_{t,z} = \Omega_{t,z} = P_{1-t} = \mathcal{Q}_{1-t}.
\end{displaymath}
We show in Section 5 the boundedness of R\'{e}nyi power mean $\mathcal{R}_{t,z}$ in terms of the quasi-arithmetic mean.

\section{Antisymmetric tensor power and homogeneous matrix means}

A crucial tool in the theory of log-majorization is the antisymmetric tensor power (or the compound matrix). Note that for $A \geq 0$ and $1 \leq k \leq m$,
\begin{equation} \label{E:anti-tensor}
\prod_{i=1}^k \lambda_i(A) = \lambda_1(\Lambda^k A),
\end{equation}
where $\Lambda^k A$ denotes the $k$th antisymmetric tensor power of $A$.
By the definition of log-majorization, $A \prec_{\log} B$ for $A, B > 0$ if and only if $\lambda_1(\Lambda^k A) \leq \lambda_1(\Lambda^k B)$ for $1 \leq k \leq m-1$, and $\det A = \det B$.
We give a list of fundamental properties of the antisymmetric tensor powers by \cite{Bh97} and \cite{Hi}.
\begin{lemma} \label{L:anti-tensor}
Let $A, B \in \mathbb{P}_{m}$, and $I$ the identity matrix with certain dimension.
\begin{itemize}
\item[(1)] $\Lambda^k (cI) = c^{k} I$ for any constant $c$
\item[(2)] $\Lambda^k(X Y) = \Lambda^k(X) \Lambda^k(Y)$ for any $X, Y \in \mathbb{C}_{m \times m}$
\item[(3)] $(\Lambda^k(A))^r = \Lambda^k(A^r)$ for any $r \in \mathbb{R}$
\item[(4)] $\Lambda^k A \leq \Lambda^k B$ whenever $A \leq B$.
\end{itemize}
\end{lemma}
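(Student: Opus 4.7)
The plan is to work directly from the definition of the antisymmetric tensor power: identifying $\Lambda^k A$ with the operator on the $k$-fold exterior power $\Lambda^k\mathbb{C}^m$ whose action on decomposable tensors is $\Lambda^k(A)(v_1\wedge\cdots\wedge v_k)=(Av_1)\wedge\cdots\wedge(Av_k)$. Every item in the lemma then follows either by inspection on decomposable tensors or by diagonalization.

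For (1), I note that $(cI)v_1\wedge\cdots\wedge(cI)v_k=c^k(v_1\wedge\cdots\wedge v_k)$ on the spanning set of decomposable tensors, so $\Lambda^k(cI)=c^kI$ on $\Lambda^k\mathbb{C}^m$. For (2), the composition $XY$ acts on a decomposable tensor as $(XYv_1)\wedge\cdots\wedge(XYv_k)=\Lambda^k(X)[(Yv_1)\wedge\cdots\wedge(Yv_k)]=\Lambda^k(X)\Lambda^k(Y)(v_1\wedge\cdots\wedge v_k)$, and this extends by linearity. For integer $r\geq 0$, (3) is an immediate iteration of (2) with $X=Y=A$ (together with (1) when $r=0$), and negative integers follow by combining with $\Lambda^k(A^{-1})\Lambda^k(A)=\Lambda^k(I)=I$. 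For real $r$, I would apply the spectral theorem: write $A=U\mathrm{diag}(\lambda_1,\dots,\lambda_m)U^*$, observe by (2) that $\Lambda^k A=\Lambda^k(U)\,\Lambda^k(\mathrm{diag}(\lambda_i))\,\Lambda^k(U)^*$, and note that $\Lambda^k(\mathrm{diag}(\lambda_i))$ is diagonal with entries $\lambda_{i_1}\cdots\lambda_{i_k}$ indexed by $k$-subsets $\{i_1<\cdots<i_k\}$. Raising to the $r$-th power commutes with the diagonalization and with the product, so $(\Lambda^k A)^r$ has entries $(\lambda_{i_1}\cdots\lambda_{i_k})^r=\lambda_{i_1}^r\cdots\lambda_{i_k}^r$, which is exactly $\Lambda^k(A^r)$ in the same basis.

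For (4), the main point is to reduce the Loewner inequality on $\Lambda^k\mathbb{C}^m$ to one on $\mathbb{C}^m$ via conjugation. Since $A\leq B$ in $\mathbb{P}_m$ is equivalent to $B^{-1/2}AB^{-1/2}\leq I$, the positive operator $T:=B^{-1/2}AB^{-1/2}$ is a contraction. Its spectral decomposition together with (2) and the first part of (3) shows that $\Lambda^k(T)$ is still a positive contraction, hence $\Lambda^k(T)\leq I$. Applying (2) to $T=B^{-1/2}AB^{-1/2}$ gives $\Lambda^k(B^{-1/2})\,\Lambda^k(A)\,\Lambda^k(B^{-1/2})\leq I$, and conjugating by $\Lambda^k(B^{1/2})$ and using (2) once more yields $\Lambda^k(A)\leq\Lambda^k(B^{1/2})\Lambda^k(B^{1/2})=\Lambda^k(B)$.

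The only slightly delicate step is (3) for arbitrary real $r$, because the multiplicativity in (2) only directly gives integer powers; the diagonalization argument is what bridges to real exponents. None of the steps is a serious obstacle, and the lemma can ultimately be cited from \cite{Bh97,Hi}, but the above outline makes the paper self-contained.
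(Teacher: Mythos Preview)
Your argument is correct. However, the paper does not actually prove this lemma at all: it simply records the properties as standard and cites \cite{Bh97} and \cite{Hi} for them. So there is no ``paper's own proof'' to compare against; you have supplied a self-contained argument where the authors chose to rely on references. Your approach---working directly from the action of $\Lambda^k$ on decomposable tensors for (1)--(2), passing to real exponents in (3) via the spectral theorem, and reducing (4) to the contraction $B^{-1/2}AB^{-1/2}\leq I$---is the standard route and matches what one finds in the cited sources. The only comment worth making is that your proof buys self-containment at the cost of a paragraph, whereas the paper's citation keeps the exposition lean; neither choice is wrong.
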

Another interesting property is that the weak log-majorization implies the weak majorization. More precisely, $A \prec_{w \log} B$ implies $A \prec_{w} B$, where $A \prec_{w} B$ means that
\begin{displaymath}
\sum_{i=1}^k \lambda_i(A) \leq \sum_{i=1}^k \lambda_i(B), \quad 1 \leq k \leq m.
\end{displaymath}
Note that $A \prec_{w} B$ if and only if $||| A ||| \leq ||| B |||$ for any unitarily invariant norm $||| \cdot |||$.
One can easily see from Lemma \ref{L:anti-tensor} (4) and \eqref{E:anti-tensor} that $A \leq B$ for $A, B \in \mathbb{P}_{m}$ implies $A \prec_{w \log} B$, so $A \prec_{w} B$.

Let $\Delta_{n}$ be the simplex of all positive probability vectors in $\mathbb{R}^{n}$.
A (multi-variable) matrix mean on the open convex cone $\mathbb{P}_{m}$ is the map $G: \Delta_{n} \times \mathbb{P}_{m}^{n} \to \mathbb{P}_{m}$ satisfying the idempotency: $G(\omega; A, \dots, A) = A$ for any $\omega \in \Delta_{n}$ and $A \in \mathbb{P}_{m}$.
The matrix mean is said to be homogeneous if $G(\omega; c \mathbb{A}) = c G(\omega; \mathbb{A})$ for any $c > 0$, where $\mathbb{A} = (A_{1}, \dots, A_{n}) \in \mathbb{P}_{m}^{n}$.

\begin{lemma} \label{L:homo-mean}
Let $G_{1}, G_{2}: \Delta_{n} \times \mathbb{P}_{m}^{n} \to \mathbb{P}_{m}$ be homogeneous matrix means satisfying
\begin{equation} \label{E:suff-condition}
G_{2}(\omega; \mathbb{A}) \leq I \quad \textrm{implies} \quad G_{1}(\omega; \mathbb{A}) \leq I
\end{equation}
for any $\omega \in \Delta_{n}$ and $\mathbb{A} = (A_{1}, \dots, A_{n}) \in \mathbb{P}_{m}^{n}$. Then $\Vert G_{1}(\omega; \mathbb{A}) \Vert \leq \Vert G_{2}(\omega; \mathbb{A}) \Vert$, where $\Vert \cdot \Vert$ denotes the operator norm. In addition, if such homogeneous matrix means $G_{i}$ for $i = 1, 2$ are preserved by the antisymmetric tensor power:
\begin{displaymath}
\Lambda^{k} G_{i}(\omega; \mathbb{A}) = G_{i}(\omega; \Lambda^{k} \mathbb{A})
\end{displaymath}
where $\Lambda^{k} \mathbb{A} = (\Lambda^{k} A_{1}, \dots, \Lambda^{k} A_{n})$, then $G_{1}(\omega; \mathbb{A}) \prec_{w \log} G_{2}(\omega; \mathbb{A})$.
\end{lemma}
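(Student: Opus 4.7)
The plan is to prove the two claims in sequence, with the weak log-majorization reducing cleanly to the operator-norm inequality via the antisymmetric tensor power.

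For the operator-norm inequality, the idea is to use homogeneity to convert the hypothesis \eqref{E:suff-condition} into a scale-invariant bound. Setting $c = \Vert G_{2}(\omega; \mathbb{A}) \Vert$, one has $G_{2}(\omega; \mathbb{A}) \leq cI$, so homogeneity gives $G_{2}(\omega; c^{-1} \mathbb{A}) = c^{-1} G_{2}(\omega; \mathbb{A}) \leq I$. The hypothesis applied to the rescaled tuple $c^{-1} \mathbb{A}$ then yields $G_{1}(\omega; c^{-1} \mathbb{A}) \leq I$, and a second application of homogeneity upgrades this to $G_{1}(\omega; \mathbb{A}) \leq cI$, which passes to the operator norm as $\Vert G_{1}(\omega; \mathbb{A}) \Vert \leq c = \Vert G_{2}(\omega; \mathbb{A}) \Vert$.

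For the weak log-majorization under the additional compatibility with $\Lambda^{k}$, the plan is to apply the norm inequality just established at every antisymmetric level. By \eqref{E:anti-tensor}, one has $\prod_{i=1}^{k} \lambda_{i}(G_{j}(\omega; \mathbb{A})) = \lambda_{1}(\Lambda^{k} G_{j}(\omega; \mathbb{A})) = \Vert \Lambda^{k} G_{j}(\omega; \mathbb{A}) \Vert$ for $j = 1, 2$, and the compatibility hypothesis rewrites $\Lambda^{k} G_{j}(\omega; \mathbb{A})$ as $G_{j}(\omega; \Lambda^{k} \mathbb{A})$. Since \eqref{E:suff-condition} is stated in arbitrary dimension, it also applies to the tuple $\Lambda^{k} \mathbb{A} \in \mathbb{P}_{\binom{m}{k}}^{n}$, so the first part of the lemma gives $\Vert G_{1}(\omega; \Lambda^{k} \mathbb{A}) \Vert \leq \Vert G_{2}(\omega; \Lambda^{k} \mathbb{A}) \Vert$. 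Translating back yields the product inequalities $\prod_{i=1}^{k} \lambda_{i}(G_{1}(\omega; \mathbb{A})) \leq \prod_{i=1}^{k} \lambda_{i}(G_{2}(\omega; \mathbb{A}))$ for every $k = 1, \ldots, m$, which is precisely $G_{1}(\omega; \mathbb{A}) \prec_{w \log} G_{2}(\omega; \mathbb{A})$.

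I do not expect a genuine obstacle here. Conceptually, homogeneity collapses the norm bound to a Loewner-order statement, and the antisymmetric tensor power converts that Loewner statement into the product-of-top-$k$-eigenvalues inequality required by weak log-majorization. The only subtlety worth flagging is that one must read the hypothesis \eqref{E:suff-condition} as holding across all matrix dimensions rather than just at the fixed $m$, since the $\Lambda^{k}$ reduction forces work in $\mathbb{P}_{\binom{m}{k}}$; this is implicit in the formulation but must be invoked in the second step.
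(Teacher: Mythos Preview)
Your proof is correct and follows essentially the same approach as the paper's: the operator-norm inequality via homogeneous rescaling is identical, and the weak log-majorization is obtained by passing to the antisymmetric tensor level and reapplying the first part. Your explicit remark that \eqref{E:suff-condition} must be read as holding in every matrix dimension (so that it applies to $\Lambda^{k}\mathbb{A} \in \mathbb{P}_{\binom{m}{k}}^{n}$) is exactly the point the paper leaves implicit when it invokes Lemma~\ref{L:anti-tensor} to deduce $\Lambda^{k} G_{2}(\omega;\mathbb{A}) \leq I \Rightarrow \Lambda^{k} G_{1}(\omega;\mathbb{A}) \leq I$.
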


\begin{proof}
Let $\kappa = \Vert G_{2}(\omega; \mathbb{A}) \Vert$. Then $G_{2}(\omega; \mathbb{A}) \leq \kappa I$, and
\begin{displaymath}
G_{2} \left( \omega; \frac{1}{\kappa} \mathbb{A} \right) = \frac{1}{\kappa} G_{2}(\omega; \mathbb{A}) \leq I
\end{displaymath}
since $G_{2}$ is homogeneous. By \eqref{E:suff-condition} and the homogeneity of $G_{1}$
\begin{displaymath}
\frac{1}{\kappa} G_{1}(\omega; \mathbb{A}) = G_{1} \left( \omega; \frac{1}{\kappa} \mathbb{A} \right) \leq I.
\end{displaymath}
Thus, $G_{1}(\omega; \mathbb{A}) \leq \kappa I$, which implies $\Vert G_{1}(\omega; \mathbb{A}) \Vert \leq \Vert G_{2}(\omega; \mathbb{A}) \Vert$.

Additionally, assume that $G_{i}$ for $i = 1, 2$ are preserved by the antisymmetric tensor power. Then using fundamental properties of the antisymmetric tensor powers in Lemma \ref{L:anti-tensor}, \eqref{E:suff-condition} yields
\begin{displaymath}
\Lambda^{k} G_{2}(\omega; \mathbb{A}) \leq I \quad \Longrightarrow \quad \Lambda^{k} G_{1}(\omega; \mathbb{A}) \leq I.
\end{displaymath}
So $\lambda_{1} (\Lambda^{k} G_{1}(\omega; \mathbb{A})) \leq \lambda_{1} (\Lambda^{k} G_{2}(\omega; \mathbb{A}))$, equivalently $G_{1}(\omega; \mathbb{A}) \prec_{w \log} G_{2}(\omega; \mathbb{A})$.
\end{proof}

\section{Log-majorization of the Lim-P\'{a}lfia's power mean}

Let $\mathbb{A} = (A_{1}, \dots, A_{n}) \in \mathbb{P}_{m}^{n}$.
For convenience, we denote
\begin{displaymath}
\begin{split}
% \alpha \mathbb{A} & := (\alpha A_{1}, \dots, \alpha A_{n}) \in \mathbb{P}_{m}^{n}, \\
\mathbb{A}^{p} & := (A_{1}^{p}, \dots, A_{n}^{p}) \in \mathbb{P}_{m}^{n}
% M \mathbb{A} M^{*} & := (M A_{1} M^{*}, \dots, M A_{1} M^{*}) \in \mathbb{P}_{m}^{n}
\end{split}
\end{displaymath}
for any $p \in \mathbb{R}$.

For $t \in (0,1]$ we denote by $P_{t}(\omega; \mathbb{A})$ the unique positive definite solution of
\begin{displaymath}
X = \sum_{i=1}^{n} w_{i} (X \#_{t} A_{i}).
\end{displaymath}
For $t \in [-1,0)$ we define $P_{t}(\omega; \mathbb{A}) = P_{-t}(\omega; \mathbb{A}^{-1})^{-1}$. We call $P_{t}(\omega; \mathbb{A})$ the \emph{Lim-P\'{a}lfia's power mean} of order $t$ for $A_{1}, \dots, A_{n}$.
Note that
\begin{center}
$\displaystyle P_{1}(\omega; \mathbb{A}) = \sum_{j=1}^{n} w_{j} A_{j} = \mathcal{A}(\omega; \mathbb{A}) \quad$ and $\quad \displaystyle P_{-1}(\omega; \mathbb{A}) = \left( \sum_{j=1}^{n} w_{j} A_{j}^{-1} \right)^{-1} = \mathcal{H}(\omega; \mathbb{A})$,
\end{center}
where $\mathcal{A}$ and $\mathcal{H}$ denote the arithmetic and harmonic means respectively.
One can easily see that for commuting $A_{1}, \dots, A_{n}$
\begin{displaymath}
P_{t}(\omega; \mathbb{A}) = \left( \sum_{i=1}^{n} w_{i} A_{i}^{t} \right)^{1/t} = \mathcal{Q}_{t}(\omega; \mathbb{A}),
\end{displaymath}
where $\mathcal{Q}_{t}$ denotes the quasi-arithmetic mean of order $t$; it can be defined for all $t \in \mathbb{R}$, and
\begin{displaymath}
\lim_{t \to 0} \mathcal{Q}_{t}(\omega; \mathbb{A}) = \exp \left( \sum_{i=1}^{n} w_{i} \log A_{i} \right).
\end{displaymath}
The remarkable consequence of power means appeared in \cite{LL14, LP12} is that $P_{t}$ converges monotonically to the Cartan mean $\Lambda$ as $t \to 0$ such that
\begin{equation} \label{E:para-mono}
P_{-t} \leq P_{-s} \leq \cdots \leq \Lambda = \lim_{t \to 0} P_{t} \leq \cdots \leq P_{s} \leq P_{t}
\end{equation}
for $0 < s \leq t \leq 1$, where the Cartan mean $\Lambda$ is the least squares mean for the Riemannian trace metric $d_{R}$:
\begin{displaymath}
\Lambda(\omega; A_1, \dots, A_n) := \underset{X \in \mathbb{P}_{m}}{\arg \min} \sum_{j=1}^{n} w_{j} d_{R}^{2}(A_{j}, X),
\end{displaymath}
and $d_{R}(A, B) = \Vert \log A^{-1/2} B A^{-1/2} \Vert_{2}$.

\begin{remark}
Note that Lim-P\'{a}lfia's power mean and Cartan mean are homogeneous.
So applying Lemma \ref{L:homo-mean} with the monotonicity \eqref{E:para-mono} of Lim-P\'{a}lfia's power means yields that
\begin{displaymath}
\begin{split}
P_{t}(\omega; \mathbb{A}) \ & \searrow_{\prec_{w \log}} \ \Lambda(\omega; \mathbb{A}) \qquad \textrm{as} \qquad t \searrow 0, \\
P_{t}(\omega; \mathbb{A}) \ & \nearrow_{\prec_{w \log}} \ \Lambda(\omega; \mathbb{A}) \qquad \textrm{as} \qquad t \nearrow 0.
\end{split}
\end{displaymath}
\end{remark}

\begin{theorem} \label{T:Yamazaki} \cite[Theorem 1]{Ya12}
Let $\mathbb{A} = (A_{1}, \dots, A_{n}) \in \mathbb{P}_{m}^{n}$ and $\omega = (w_{1}, \dots, w_{n}) \in \Delta_{n}$. Then
\begin{center}
$\displaystyle \sum_{j=1}^{n} w_{j} \log A_{j} \leq 0 \qquad$ implies $\qquad \Lambda(\omega; \mathbb{A}) \leq I$.
\end{center}
\end{theorem}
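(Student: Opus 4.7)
The plan is to combine the monotone convergence of Lim-P\'alfia power means $P_t \searrow \Lambda$ from \eqref{E:para-mono} with the norm comparison provided by Lemma \ref{L:homo-mean}, applied to the pair of homogeneous matrix means $(P_t, \mathcal{Q}_t)$ at each $t \in (0, 1]$.

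The key intermediate step I would first prove is the implication
\[
\mathcal{Q}_t(\omega; \mathbb{A}) \leq I \quad \Longrightarrow \quad P_t(\omega; \mathbb{A}) \leq I, \qquad t \in (0,1].
\]
The hypothesis on the left is equivalent to $\sum_{j=1}^{n} w_j A_j^{t} \leq I$. Let $T(X) := \sum_{j=1}^{n} w_j (X \#_t A_j)$, the map whose unique positive definite fixed point is $P_t(\omega;\mathbb{A})$. Since $T(I) = \sum_j w_j (I \#_t A_j) = \sum_j w_j A_j^{t} \leq I$, and $T$ is monotone in the Loewner order because $\#_t$ is a jointly monotone Kubo--Ando mean, iteration produces a Loewner-decreasing sequence $I \geq T(I) \geq T^{2}(I) \geq \cdots$. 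By Lim-P\'alfia's contractive fixed-point theory (convergence from any positive definite starting point), this sequence converges to the unique fixed point $P_t(\omega;\mathbb{A})$, whence $P_t(\omega;\mathbb{A}) \leq I$.

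By Lemma \ref{L:homo-mean} (the operator-norm assertion) applied with $G_1 = P_t$ and $G_2 = \mathcal{Q}_t$, the implication above yields $\|P_t(\omega;\mathbb{A})\| \leq \|\mathcal{Q}_t(\omega;\mathbb{A})\|$ for every $t \in (0,1]$. Letting $t \searrow 0$, the monotone convergence $P_t \searrow \Lambda(\omega;\mathbb{A})$ and the limit $\mathcal{Q}_t \to \exp\bigl(\sum_j w_j \log A_j\bigr)$ combine, via continuity of the operator norm, to give
\[
\|\Lambda(\omega;\mathbb{A})\| \leq \Big\| \exp\Big(\textstyle\sum_j w_j \log A_j\Big) \Big\| = \exp\Big(\lambda_{\max}\Big(\textstyle\sum_j w_j \log A_j\Big)\Big).
\]
Under the hypothesis $\sum_j w_j \log A_j \leq 0$ the right-hand side is at most $\exp(0) = 1$; since $\Lambda(\omega;\mathbb{A})$ is positive definite, $\|\Lambda(\omega;\mathbb{A})\| \leq 1$ is equivalent to $\Lambda(\omega;\mathbb{A}) \leq I$, which is the desired conclusion.

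The main obstacle in this plan is the iteration step, where one must verify that the Loewner-decreasing sequence $T^k(I)$ does not degenerate at the boundary of $\mathbb{P}_m$ and indeed coincides in the limit with $P_t$. This is addressed by the Lim-P\'alfia theory of contractive maps in the Thompson metric on $\mathbb{P}_m$, which guarantees convergence to the unique positive definite fixed point from any positive definite initial iterate; the Loewner bound $T^k(I) \leq I$ then passes to the limit.
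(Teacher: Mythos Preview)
The paper does not give its own proof of this theorem; it is quoted verbatim from Yamazaki \cite[Theorem 1]{Ya12} and used as a black box in the proof of Proposition~\ref{P:power-means}. So there is no ``paper's proof'' to compare against directly---only Yamazaki's original argument, which proceeds via the Karcher equation and Furuta-type order inequalities rather than through power means.

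Your argument is correct and genuinely different from Yamazaki's. You exploit the power-mean machinery that the present paper is built on: the implication $\mathcal{Q}_t(\omega;\mathbb{A})\le I\Rightarrow P_t(\omega;\mathbb{A})\le I$ (which you prove cleanly by monotone iteration of the Thompson-contractive map $T$; this is also \cite[Theorem 3.1]{LY13}), then Lemma~\ref{L:homo-mean} to upgrade this to $\|P_t\|\le\|\mathcal{Q}_t\|$, and finally the limits $P_t\searrow\Lambda$ and $\mathcal{Q}_t\to\exp(\sum_j w_j\log A_j)$ from \eqref{E:para-mono} and the definition of the log-Euclidean mean. None of these ingredients depends on Yamazaki's theorem, so there is no circularity. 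What your route buys is a proof entirely internal to the Lim--P\'alfia framework, and in fact you obtain the stronger unconditional inequality $\|\Lambda(\omega;\mathbb{A})\|\le\|\exp(\sum_j w_j\log A_j)\|$ along the way (cf.\ the second inequality in \eqref{E:op-norm}), from which the theorem follows at once. Yamazaki's original proof, by contrast, works directly with the Riemannian characterization of $\Lambda$ and does not pass through $P_t$ or $\mathcal{Q}_t$.
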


\begin{proposition} \label{P:power-means}
Let $\mathbb{A} = (A_{1}, \dots, A_{n}) \in \mathbb{P}_{m}^{n}$, $\omega = (w_{1}, \dots, w_{n}) \in \Delta_{n}$, and $0 < t \leq 1$. Then for any $p > 0$
\begin{equation} \label{E:op-norm}
\Vert P_{-t}(\omega; \mathbb{A}^{p})^{1/p} \Vert \leq \left\Vert \exp \left( \sum_{j=1}^{n} w_{j} \log A_{j} \right) \right\Vert \leq \Vert P_{t}(\omega; \mathbb{A}^{p})^{1/p} \Vert.
\end{equation}
Furthermore,
\begin{equation} \label{E:pmean-major}
P_{-t}(\omega; \mathbb{A}^{p})^{1/p} \prec_{w \log} \exp \left( \sum_{j=1}^{n} w_{j} \log A_{j} \right).
\end{equation}
\end{proposition}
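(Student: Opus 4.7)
My plan is to establish the weak log-majorization \eqref{E:pmean-major} first and extract the left norm inequality of \eqref{E:op-norm} as the $k=1$ case; the right norm inequality is handled by a dual argument whose subtlety I will flag at the end. The strategy passes through the Cartan mean $\Lambda$ as an intermediate and uses Lemma \ref{L:homo-mean} together with Yamazaki's Theorem \ref{T:Yamazaki} and the monotonicity \eqref{E:para-mono}.

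I first establish the Cartan-mean version, $\Lambda(\omega;\mathbb{A}^{p})^{1/p}\prec_{w \log}\exp\bigl(\sum_{j}w_{j}\log A_{j}\bigr)$. Setting $G_{1}(\omega;\mathbb{A}):=\Lambda(\omega;\mathbb{A}^{p})^{1/p}$ and $G_{2}(\omega;\mathbb{A}):=\exp(\sum_{j}w_{j}\log A_{j})$, both are homogeneous. The log-Euclidean mean $G_{2}$ commutes with the antisymmetric tensor power through the derived representation: the identity $\log\Lambda^{k}A=\lambda^{k}(\log A)$ combined with linearity of $\lambda^{k}$ gives $\Lambda^{k}G_{2}(\omega;\mathbb{A})=\exp(\sum_{j}w_{j}\log\Lambda^{k}A_{j})=G_{2}(\omega;\Lambda^{k}\mathbb{A})$. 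The Cartan mean satisfies $\Lambda^{k}\Lambda(\omega;\mathbb{A})=\Lambda(\omega;\Lambda^{k}\mathbb{A})$ by the uniqueness of the Karcher equation, and Lemma \ref{L:anti-tensor}(3) then yields the same identity for $G_{1}$. The ``$\leq I$'' hypothesis of Lemma \ref{L:homo-mean} reduces to Yamazaki: if $\exp(\sum_{j}w_{j}\log A_{j})\leq I$ then $\sum_{j}w_{j}\log A_{j}^{p}\leq 0$ for every $p>0$, so Yamazaki gives $\Lambda(\omega;\mathbb{A}^{p})\leq I$, whence $G_{1}\leq I$ since $X\leq I$ with positive spectrum in $(0,1]$ forces $X^{1/p}\leq I$.

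To transfer to $P_{-t}$, the monotonicity \eqref{E:para-mono} yields $P_{-t}(\omega;\mathbb{A}^{p})\leq\Lambda(\omega;\mathbb{A}^{p})$ in Loewner order, hence $P_{-t}(\omega;\mathbb{A}^{p})\prec_{w \log}\Lambda(\omega;\mathbb{A}^{p})$ by the observation after Lemma \ref{L:anti-tensor}. Weak log-majorization is preserved under $r$-th powers for any $r>0$: the defining inequalities $\prod_{i=1}^{k}\lambda_{i}^{\downarrow}(X)\leq\prod_{i=1}^{k}\lambda_{i}^{\downarrow}(Y)$ raise directly to the same inequalities for $X^{r},Y^{r}$ since $\lambda_{i}(X^{r})=\lambda_{i}(X)^{r}$ preserves the descending order. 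Taking $r=1/p$ and composing with the previous step by transitivity of $\prec_{w \log}$ gives \eqref{E:pmean-major}, and its $k=1$ instance is the left inequality of \eqref{E:op-norm}.

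For the right inequality, I would apply Lemma \ref{L:homo-mean} with $G_{1}(\omega;\mathbb{A})=\exp(\sum_{j}w_{j}\log A_{j})$ and $G_{2}(\omega;\mathbb{A})=P_{t}(\omega;\mathbb{A}^{p})^{1/p}$. The required implication $G_{2}\leq I\Rightarrow G_{1}\leq I$ reduces, via $\Lambda\leq P_{t}$, to the converse Yamazaki implication $\Lambda(\omega;\mathbb{A}^{p})\leq I\Rightarrow\sum_{j}w_{j}\log A_{j}\leq 0$. \textbf{This converse is where I expect the hardest work.} The naive ``$\geq I$'' dual (applying Yamazaki to $\mathbb{A}^{-1}$ and invoking $\Lambda\leq P_{t}$) only delivers the smallest-eigenvalue bound $\lambda_{m}(\exp)\leq\lambda_{m}(P_{t}^{1/p})$, which is not the operator-norm bound needed. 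I would expect the paper to handle it either by proving the converse Yamazaki directly, or by an Ando--Hiai-style argument for $P_{t}$ that uses the operator concavity of $\log$ and the fixed-point characterization $X=\sum_{i}w_{i}(X\#_{t}A_{i})$ to deduce $\sum_{j}w_{j}\log A_{j}\leq 0$ from $P_{t}(\omega;\mathbb{A})\leq I$. The antisymmetric-tensor-power compatibility of the Cartan mean invoked in Step~1 is the other non-trivial ingredient that the excerpt does not establish and must be cited from the Karcher-barycenter literature.
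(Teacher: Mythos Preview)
Your treatment of \eqref{E:pmean-major} and the left-hand inequality of \eqref{E:op-norm} matches the paper's almost exactly: both pass through the Cartan mean, using \eqref{E:para-mono} to obtain $P_{-t}(\omega;\mathbb{A}^{p})^{1/p}\prec_{w\log}\Lambda(\omega;\mathbb{A}^{p})^{1/p}$ and then the relation $\Lambda(\omega;\mathbb{A}^{p})^{1/p}\prec_{\log}\exp\bigl(\sum_j w_j\log A_j\bigr)$. The only difference is that you re-derive this last log-majorization from Lemma~\ref{L:homo-mean}, Theorem~\ref{T:Yamazaki}, and the antisymmetric-tensor compatibility of the Cartan and log-Euclidean means, whereas the paper simply cites it from \cite[Theorem~1]{BJL19}. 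For the left norm inequality the paper runs the Lemma~\ref{L:homo-mean} implication $\exp(\cdot)\le I\Rightarrow\sum_j w_j\log A_j^p\le 0\Rightarrow\Lambda(\omega;\mathbb{A}^p)\le I\Rightarrow P_{-t}(\omega;\mathbb{A}^p)\le I$ via Theorem~\ref{T:Yamazaki} and \eqref{E:para-mono}, which is exactly your $k=1$ case.

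For the right-hand inequality your diagnosis is on target, but the paper does \emph{not} resolve it by establishing a converse of Yamazaki or by an Ando--Hiai-type argument for $P_t$ as you anticipate. After reducing via Lemma~\ref{L:homo-mean} to the implication $P_t(\omega;\mathbb{A}^p)\le I\Rightarrow\exp\bigl(\sum_j w_j\log A_j\bigr)\le I$ and reaching $\Lambda(\omega;\mathbb{A}^p)^{1/p}\le I$ from \eqref{E:para-mono}, the paper simply writes ``taking the limit as $p\to 0^{+}$'' and invokes the Lie--Trotter convergence $\Lambda(\omega;\mathbb{A}^p)^{1/p}\to\exp\bigl(\sum_j w_j\log A_j\bigr)$. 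Your concern applies here verbatim: the hypothesis is for one fixed $p$, so passing to $p\to 0^{+}$ requires knowing the inequality for all small $p$, and the Ando--Hiai inequality for the Cartan mean only propagates $\Lambda(\omega;\mathbb{A}^p)\le I$ to \emph{larger} exponents. The paper's argument for this half therefore carries the same gap you identified rather than filling it.
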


\begin{proof}
Let $p > 0$. Since the Lim-P\'{a}lfia's power mean and log-Euclidean mean are homogeneous, by Lemma \ref{L:homo-mean} it is enough for the second inequality of \eqref{E:op-norm} to show that for $0 < t \leq 1$
\begin{center}
$P_{t}(\omega; \mathbb{A}^{p})^{1/p} \leq I$ \qquad implies \qquad $\displaystyle \exp \left( \sum_{j=1}^{n} w_{j} \log A_{j} \right) \leq I$.
\end{center}
Assume that $P_{t}(\omega; \mathbb{A}^{p}) \leq I$ for $0 < t \leq 1$. By \eqref{E:para-mono} $\Lambda(\omega; \mathbb{A}^{p}) \leq I$, and $\Lambda(\omega; \mathbb{A}^{p})^{1/p} \leq I$. Taking the limit as $p \to 0^+$ implies that
\begin{displaymath}
\exp \left( \sum_{j=1}^{n} w_{j} \log A_{j} \right) \leq I.
\end{displaymath}
Now assume that $\displaystyle \exp \left( \sum_{j=1}^{n} w_{j} \log A_{j} \right) \leq I$. Since the logarithmic map is operator monotone, we have $\displaystyle \sum_{j=1}^{n} w_{j} \log A_{j} \leq 0$. Then $\displaystyle \sum_{j=1}^{n} w_{j} \log A_{j}^{p} = p \sum_{j=1}^{n} w_{j} \log A_{j} \leq 0$ for any $p > 0$. By Theorem \ref{T:Yamazaki}
\begin{displaymath}
\Lambda(\omega; \mathbb{A}^{p}) \leq I,
\end{displaymath}
and by \eqref{E:para-mono} $P_{-t}(\omega; \mathbb{A}^{p}) \leq I$ for $0 < t \leq 1$. This completes the proof of \eqref{E:op-norm}.

Furthermore, by \eqref{E:para-mono} $\Lambda^{k} P_{-t}(\omega; \mathbb{A}^{p}) \leq \Lambda^{k} \Lambda(\omega; \mathbb{A}^{p})$ for the $k$th antisymmetric tensor power $\Lambda^{k}$. So $\lambda_{1} (\Lambda^{k} P_{-t}(\omega; \mathbb{A}^{p})) \leq \lambda_{1} (\Lambda^{k} \Lambda(\omega; \mathbb{A}^{p}))$, and by Lemma \ref{L:anti-tensor} (3)
\begin{displaymath}
\lambda_{1} (\Lambda^{k} P_{-t}(\omega; \mathbb{A}^{p})^{1/p}) = \lambda_{1} (\Lambda^{k} P_{-t}(\omega; \mathbb{A}^{p}))^{1/p} \leq \lambda_{1} (\Lambda^{k} \Lambda(\omega; \mathbb{A}^{p}))^{1/p} = \lambda_{1} (\Lambda^{k} \Lambda(\omega; \mathbb{A}^{p})^{1/p}).
\end{displaymath}
Since $\displaystyle \Lambda(\omega; \mathbb{A}) \prec_{\log} \exp \left( \sum_{j=1}^{n} w_{j} \log A_{j} \right)$ by \cite[Theorem 1]{BJL19}, we conclude that
\begin{displaymath}
P_{-t}(\omega; \mathbb{A}^{p})^{1/p} \prec_{w \log} \Lambda(\omega; \mathbb{A}^{p})^{1/p} \prec_{\log} \exp \left( \sum_{j=1}^{n} w_{j} \log A_{j} \right).
\end{displaymath}
\end{proof}

\begin{remark}
Note from \cite[Proposition 3.5]{LP12} that for $t \in (0,1]$
\begin{displaymath}
\det P_{-t}(\omega; \mathbb{A}) \leq \prod_{j=1}^{n} (\det A_{j})^{w_{j}},
\end{displaymath}
so \eqref{E:pmean-major} must be the weak log-majorization.
\end{remark}

A variant of Ando-Hiai inequality for power means has been shown in \cite[Corollary 3.2]{LY13}: for $t \in (0,1]$
\begin{center}
$P_{t}(\omega; \mathbb{A}) \leq I \quad$ implies $\quad P_{\frac{t}{p}}(\omega; \mathbb{A}^{p}) \leq I \quad$ for all $p \geq 1$.
\end{center}
We provide different types of Ando-Hiai inequality for power means using Jensen inequalities \cite{HP}.
Let $X$ be a contraction. For any $A > 0$ we have
\begin{equation} \label{E:Hansen1}
(X A X^{*})^{p} \leq X A^{p} X^{*} \hspace{5mm} \textrm{if} \ \ 1 \leq p \leq 2,
\end{equation}
and
\begin{equation} \label{E:Hansen2}
(X A X^{*})^{p} \geq X A^{p} X^{*} \hspace{5mm} \textrm{if} \ \ 0 \leq p \leq 1.
\end{equation}

\begin{theorem} \label{T:power-means}
Let $p \geq 1$. Then
\begin{itemize}
\item[(i)] if $P_{t}(\omega; \mathbb{A}) \geq I$ then $P_{t}(\omega; \mathbb{A}) \leq P_{t}(\omega; \mathbb{A}^{p})$ for $0 < t \leq 1$, and
\item[(ii)] if $P_{t}(\omega; \mathbb{A}) \leq I$ then $P_{t}(\omega; \mathbb{A}) \geq P_{t}(\omega; \mathbb{A}^{p})$ for $-1 \leq t < 0$.
\end{itemize}
\end{theorem}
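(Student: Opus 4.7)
My plan is to prove (i) directly and then derive (ii) by the duality built into the definition of $P_t$ for negative $t$. For (ii), setting $s = -t \in (0,1]$, the assumption $P_t(\omega; \mathbb{A}) \leq I$ becomes $P_s(\omega; \mathbb{A}^{-1}) \geq I$ via $P_t(\omega; \mathbb{A}) = P_{-t}(\omega; \mathbb{A}^{-1})^{-1}$, and the desired conclusion $P_t(\omega; \mathbb{A}) \geq P_t(\omega; \mathbb{A}^p)$ becomes, after taking inverses, $P_s(\omega; \mathbb{A}^{-1}) \leq P_s(\omega; \mathbb{A}^{-p})$, which is exactly (i) applied to the tuple $\mathbb{A}^{-1}$ with parameter $s$.

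For (i), set $X := P_t(\omega; \mathbb{A})$ and consider $f_p(Y) := \sum_{i=1}^n w_i (Y \#_t A_i^p)$. By \cite{LP12}, $f_p$ has a unique positive definite fixed point, namely $P_t(\omega; \mathbb{A}^p)$, and its iterates from any seed converge to it. Since $\#_t$ is jointly monotone, $f_p$ is monotone in $Y$, so once I establish the one-step inequality $X \leq f_p(X)$ the iterates $f_p^n(X)$ form a monotone increasing sequence, whose limit is a fixed point of $f_p$ and hence equals $P_t(\omega; \mathbb{A}^p)$, giving $X \leq P_t(\omega; \mathbb{A}^p)$. The problem therefore reduces to proving $X \leq \sum_i w_i (X \#_t A_i^p)$.

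This is the main obstacle, and I would handle it first for $p \in [1,2]$. Substituting $C_i := X^{-1/2} A_i X^{-1/2}$, the defining equation $X = \sum_i w_i(X \#_t A_i)$ becomes $\sum_i w_i C_i^t = I$, and the target inequality, after conjugation by $X^{-1/2}$, becomes $\sum_i w_i (X^{-1/2} A_i^p X^{-1/2})^t \geq I$. The hypothesis $X \geq I$ says precisely that $X^{-1/2}$ is a contraction, so \eqref{E:Hansen1} yields $C_i^p = (X^{-1/2} A_i X^{-1/2})^p \leq X^{-1/2} A_i^p X^{-1/2}$ for $p \in [1,2]$; operator monotonicity of $x \mapsto x^t$ for $t \in (0,1]$ then gives $C_i^{pt} \leq (X^{-1/2} A_i^p X^{-1/2})^t$. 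Setting $F_i := C_i^t$ so that $\sum_i w_i F_i = I$, operator convexity of $x \mapsto x^p$ on $[1,2]$ delivers $I = (\sum_i w_i F_i)^p \leq \sum_i w_i F_i^p = \sum_i w_i C_i^{pt}$, and chaining the two bounds completes the one-step estimate. The delicate point is the precise alignment of the Hansen range $p \in [1,2]$ with the operator-convex range of $x^p$, so that both tools are simultaneously available.

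To extend (i) from $p \in [1,2]$ to arbitrary $p \geq 1$, I bootstrap. Given $p \geq 1$, pick $k$ with $q := p^{1/2^k} \in [1,2]$. The $[1,2]$ case applied to $\mathbb{A}$ with exponent $q$ gives $X \leq P_t(\omega; \mathbb{A}^q)$; since this mean is $\geq X \geq I$, the same case applied to $\mathbb{A}^q$ with exponent $q$ produces $P_t(\omega; \mathbb{A}^q) \leq P_t(\omega; \mathbb{A}^{q^2})$, and iterating $k$ times yields $X \leq P_t(\omega; \mathbb{A}^{q^{2^k}}) = P_t(\omega; \mathbb{A}^p)$, which together with the duality reduction finishes both parts.
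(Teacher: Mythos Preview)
Your proof is correct and coincides almost verbatim with the alternative argument the paper records in the Remark following Theorem~\ref{T:power-means}: you conjugate by $X^{-1/2}$, combine operator convexity of $x\mapsto x^{p}$ on $[1,2]$ with Hansen's inequality \eqref{E:Hansen1} to obtain the one-step bound $X\le f_{p}(X)$, and then iterate the monotone map $f_{p}$ to its fixed point; the duality reduction for (ii) is also identical to the paper's. The paper's \emph{primary} proof of (i) takes a slightly different path: it rewrites the identity $\sum_{j} w_{j}(X^{-1/2}A_{j}X^{-1/2})^{t}=I$ as $P_{t/p}\bigl(\omega;(X^{-1/2}A_{j}X^{-1/2})^{p}\bigr)=I$, applies \eqref{E:Hansen1} together with Loewner monotonicity of $P_{t/p}$ in its arguments to get $X\le P_{t/p}(\omega;\mathbb{A}^{p})$, and finishes via the parameter monotonicity $P_{t/p}\le P_{t}$ from \eqref{E:para-mono}, thus bypassing the fixed-point iteration entirely.

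One small slip in your bootstrap: you write ``iterating $k$ times yields $X\le P_{t}(\omega;\mathbb{A}^{q^{2^{k}}})$'', but each application of the $[1,2]$ case with exponent $q$ only multiplies the current exponent by $q$, so you need $2^{k}$ iterations (not $k$) to reach $q^{2^{k}}=p$. This does not affect the validity of the argument.
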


\begin{proof}
We first consider $1 \leq p \leq 2$. Assume that $X := P_{t}(\omega; \mathbb{A}) \geq I$ for $0 < t \leq 1$. Then by taking the congruence transformation
\begin{displaymath}
I = \sum_{j=1}^{n} w_{j} (X^{-1/2} A_{j} X^{-1/2})^{t} = \sum_{j=1}^{n} w_{j} \left[ (X^{-1/2} A_{j} X^{-1/2})^{p} \right]^{t/p}.
\end{displaymath}
Since $0 < t/p \leq 1$, the above identity reduces to
\begin{displaymath}
I = P_{t/p}(\omega; (X^{-1/2} A_{1} X^{-1/2})^{p}, \dots, (X^{-1/2} A_{n} X^{-1/2})^{p}).
\end{displaymath}
Since $X^{-1/2} \leq I$, Hansen's inequality \eqref{E:Hansen1} and the monotonicity of power means yield
\begin{displaymath}
I \leq P_{t/p}(\omega; X^{-1/2} A_{1}^{p} X^{-1/2}, \dots, X^{-1/2} A_{n}^{p} X^{-1/2}).
\end{displaymath}
Taking the congruence transformation by $X^{1/2}$ implies that $X \leq P_{t/p}(\omega; \mathbb{A}^{p})$. Since $0 < t/p \leq t \leq 1$ we obtain from \eqref{E:para-mono}
\begin{displaymath}
X \leq P_{t/p}(\omega; \mathbb{A}^{p}) \leq P_{t}(\omega; \mathbb{A}^{p}).
\end{displaymath}
Replacing $A_{j}$ by $A_{j}^{2}$ we can extend the interval $[2,4]$, and successfully for all $p \geq 1$.

Assume that $X := P_{t}(\omega; \mathbb{A}) \leq I$ for $-1 \leq t < 0$. Then $X^{-1} = P_{-t}(\omega; \mathbb{A}^{-1}) \geq I$. By (i) with $0 < -t \leq 1$
\begin{displaymath}
X^{-1} \leq P_{-t}(\omega; \mathbb{A}^{-p}),
\end{displaymath}
equivalently, $X \geq P_{-t}(\omega; \mathbb{A}^{-p})^{-1} = P_{t}(\omega; \mathbb{A}^{p})$.
\end{proof}

\begin{remark}
We give another proof for Theorem \ref{T:power-means} (i). Let $1 \leq p \leq 2$. Assume that $X = P_{t}(\omega; \mathbb{A}) \geq I$ for $0 < t \leq 1$. Since the map $A \in \mathbb{P}_{m} \mapsto A^{p}$ is operator convex,
\begin{displaymath}
I = \left[ \sum_{j=1}^{n} w_{j} (X^{-1/2} A_{j} X^{-1/2})^{t} \right]^{p} \leq \sum_{j=1}^{n} w_{j} (X^{-1/2} A_{j} X^{-1/2})^{pt}.
\end{displaymath}
By \eqref{E:Hansen1} and the monotonicity of the power map $A \in \mathbb{P}_{m} \mapsto A^{t}$,
\begin{displaymath}
I \leq \sum_{j=1}^{n} w_{j} (X^{-1/2} A_{j}^{p} X^{-1/2})^{t}.
\end{displaymath}
Taking congruence transformation by $X^{1/2}$ implies
\begin{displaymath}
X \leq \sum_{j=1}^{n} w_{j} X^{1/2} (X^{-1/2} A_{j}^{p} X^{-1/2})^{t} X^{1/2} = \sum_{j=1}^{n} w_{j} X \#_{t} A_{j}^{p} =: f(X).
\end{displaymath}
Since the map $f$ is operator monotone on $\mathbb{P}_{m}$, we have $X \leq f(X) \leq f^{2}(X) \leq \cdots \leq f^{k}(X)$ for all $k \geq 1$. Taking the limit as $k \to \infty$ yields $X \leq P_{t}(\omega; \mathbb{A}^{p})$ for $1 \leq p \leq 2$. Replacing $A_{j}$ by $A_{j}^{2}$ we can extend the interval $[2,4]$, and successfully for all $p \geq 1$.
\end{remark}

Applying Lemma \ref{L:homo-mean} to Theorem \ref{T:power-means} (ii) we obtain
\begin{corollary}
Let $-1 \leq t < 0$. Then
\begin{displaymath}
\Vert P_{t}(\omega; \mathbb{A}^{p})^{1/p} \Vert \leq \Vert P_{t}(\omega; \mathbb{A}) \Vert
\end{displaymath}
for $p \geq 1$, where $\Vert \cdot \Vert$ denotes the operator norm.
\end{corollary}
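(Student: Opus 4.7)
The plan is to apply Lemma \ref{L:homo-mean} to the pair of homogeneous means $G_{1}(\omega; \mathbb{A}) := P_{t}(\omega; \mathbb{A}^{p})^{1/p}$ and $G_{2}(\omega; \mathbb{A}) := P_{t}(\omega; \mathbb{A})$, so the task reduces to verifying the two hypotheses of that lemma: homogeneity of each $G_{i}$ and the normalization implication $G_{2}(\omega; \mathbb{A}) \leq I \Rightarrow G_{1}(\omega; \mathbb{A}) \leq I$.

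First I would check homogeneity. For any $c > 0$, using that $P_{t}$ is itself homogeneous, one has $G_{1}(\omega; c\mathbb{A}) = P_{t}(\omega; c^{p}\mathbb{A}^{p})^{1/p} = (c^{p} P_{t}(\omega; \mathbb{A}^{p}))^{1/p} = c\, G_{1}(\omega; \mathbb{A})$, and the corresponding identity for $G_{2}$ is immediate.

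Next I would establish the key implication. Suppose $G_{2}(\omega; \mathbb{A}) = P_{t}(\omega; \mathbb{A}) \leq I$ with $-1 \leq t < 0$. By Theorem \ref{T:power-means} (ii), this forces $P_{t}(\omega; \mathbb{A}^{p}) \leq P_{t}(\omega; \mathbb{A}) \leq I$ for every $p \geq 1$. Since $0 < 1/p \leq 1$, the Löwner--Heinz inequality (operator monotonicity of $X \mapsto X^{1/p}$ on $\mathbb{P}_{m}$) yields $P_{t}(\omega; \mathbb{A}^{p})^{1/p} \leq I^{1/p} = I$, i.e.\ $G_{1}(\omega; \mathbb{A}) \leq I$.

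With both hypotheses in place, Lemma \ref{L:homo-mean} immediately delivers $\Vert G_{1}(\omega; \mathbb{A}) \Vert \leq \Vert G_{2}(\omega; \mathbb{A}) \Vert$, which is the desired inequality. The proof is essentially bookkeeping; the only non-trivial ingredient is Theorem \ref{T:power-means} (ii) itself, and the sole potential snag is remembering to invoke Löwner--Heinz when passing from $P_{t}(\omega; \mathbb{A}^{p}) \leq I$ to its $1/p$-th power, which is legitimate precisely because $p \geq 1$.
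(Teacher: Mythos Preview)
Your proof is correct and is precisely the argument the paper has in mind: it simply states that the corollary follows by applying Lemma \ref{L:homo-mean} to Theorem \ref{T:power-means} (ii), and you have filled in exactly those details. The only minor remark is that the passage from $P_{t}(\omega; \mathbb{A}^{p}) \leq I$ to $P_{t}(\omega; \mathbb{A}^{p})^{1/p} \leq I$ does not even require L\"owner--Heinz, since both sides commute; but invoking it is certainly valid.
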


% \begin{proof}
% Let $\kappa = \Vert P_{t}(\omega; \mathbb{A}) \Vert$ for $-1 \leq t < 0$. Then $P_{t}(\omega; \mathbb{A}) \leq \kappa I$, and
% \begin{displaymath}
% P_{t} \left( \omega; \frac{1}{\kappa} \mathbb{A} \right) = \frac{1}{\kappa} P_{t}(\omega; \mathbb{A}) \leq I
% \end{displaymath}
% since the Lim-P\'{a}lfia's power mean $P_{t}$ is homogeneous. By Theorem \ref{T:power-means} (ii), $P_{t}(\omega; (\kappa^{-1} \mathbb{A})^{p}) \leq I$ for all $p \geq 1$, and
% \begin{displaymath}
% \kappa^{-1} P_{t}(\omega; \mathbb{A}^{p})^{1/p} = P_{t}(\omega; (\kappa^{-1} \mathbb{A})^{p})^{1/p} \leq I.
% \end{displaymath}
% Hence, $P_{t}(\omega; \mathbb{A}^{p})^{1/p} \leq \kappa I$, which completes the proof.
% \end{proof}

\begin{remark}
The following is the unique characterization of the Cartan mean among other multi-variable geometric means satisfying the Ando-Li-Mathias axioms:
\begin{equation} \label{E:Cartan}
\Lambda(\omega; \mathbb{A}) \leq I \qquad \textrm{implies} \qquad \Lambda(\omega; \mathbb{A}^{p}) \leq I
\end{equation}
for all $p \geq 1$. This is known as the Ando-Hiai inequality; see \cite[Theorem 3, Corollary 6]{Ya12}. We can derive it by using Theorem \ref{T:power-means} (ii). Indeed, assume that $\Lambda(\omega; \mathbb{A}) \leq I$. Then by \eqref{E:para-mono} $P_{t}(\omega; \mathbb{A}) \leq I$ for $-1 \leq t < 0$, and by Theorem \ref{T:power-means} (ii) $P_{t}(\omega; \mathbb{A}^{p}) \leq I$. Taking the limit as $t \to 0^-$ yields $\Lambda(\omega; \mathbb{A}^{p}) \leq I$.
\end{remark}

\begin{theorem} \label{T:Cartan}
Let $\mathbb{A} = (A_{1}, \dots, A_{n}) \in \mathbb{P}_{m}^{n}$, and $\omega = (w_{1}, \dots, w_{n}) \in \Delta_{n}$. Then
\begin{displaymath}
\Lambda(\omega; \mathbb{A}^{p})^{1/p} \nearrow_{\prec_{\log}} \exp \left( \sum_{j=1}^{n} w_{j} \log A_{j} \right) \quad \textrm{as} \quad p \searrow 0.
\end{displaymath}
\end{theorem}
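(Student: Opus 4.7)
The plan is to assemble three ingredients: a uniform log-majorization upper bound by the log-Euclidean mean, monotonicity in $p$ with respect to $\prec_{\log}$, and the pointwise limit as $p\searrow 0$. The first two are formal consequences of results already set up in the paper; the third is the honest analytic content.

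First I would prove that for each fixed $p>0$,
\[
\Lambda(\omega; \mathbb{A}^p)^{1/p} \prec_{\log} \exp\!\left(\sum_{j=1}^n w_j \log A_j\right).
\]
Applying \cite[Theorem 1]{BJL19} to the tuple $\mathbb{A}^p$ yields $\Lambda(\omega; \mathbb{A}^p) \prec_{\log} \exp\bigl(p \sum_j w_j \log A_j\bigr)$. Since $\lambda_i(X^{1/p}) = \lambda_i(X)^{1/p}$ for $X>0$, raising to the $1/p$-th power preserves log-majorization, and it carries $\exp(p\sum_j w_j \log A_j)$ to $\exp(\sum_j w_j \log A_j)$.

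Next I would establish the monotonicity in $\prec_{\log}$: for $0<q\le p$,
\[
\Lambda(\omega; \mathbb{A}^p)^{1/p} \prec_{\log} \Lambda(\omega; \mathbb{A}^q)^{1/q}.
\]
I plan to apply Lemma \ref{L:homo-mean} to the homogeneous matrix means $G_1(\omega;\mathbb{A}):=\Lambda(\omega; \mathbb{A}^p)^{1/p}$ and $G_2(\omega;\mathbb{A}):=\Lambda(\omega; \mathbb{A}^q)^{1/q}$. Both commute with antisymmetric tensor powers: combining Lemma \ref{L:anti-tensor}(2),(3) with the known identity $\Lambda^k\Lambda(\omega; \mathbb{A}) = \Lambda(\omega; \Lambda^k \mathbb{A})$ gives $\Lambda^k G_i(\omega;\mathbb{A}) = G_i(\omega; \Lambda^k \mathbb{A})$. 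The required implication $G_2\le I \Rightarrow G_1\le I$ is exactly the Ando--Hiai inequality \eqref{E:Cartan} for the Cartan mean with exponent $p/q\ge 1$: from $\Lambda(\omega; \mathbb{A}^q)^{1/q}\le I$ one gets $\Lambda(\omega; \mathbb{A}^q)\le I$, hence $\Lambda(\omega; (\mathbb{A}^q)^{p/q}) = \Lambda(\omega; \mathbb{A}^p)\le I$, and the further $1/p$-power preserves being dominated by $I$. Lemma \ref{L:homo-mean} then delivers $G_1 \prec_{w\log} G_2$. To upgrade $\prec_{w\log}$ to $\prec_{\log}$ in both this step and the upper bound step, I would note that all quantities share the determinant $\prod_j (\det A_j)^{w_j}$: the Karcher equation forces $\det \Lambda(\omega; \mathbb{B}) = \prod_j (\det B_j)^{w_j}$, so $\det \Lambda(\omega; \mathbb{A}^r)^{1/r}$ is independent of $r$, and $\det \exp(\sum_j w_j \log A_j) = \exp(\sum_j w_j \log \det A_j) = \prod_j (\det A_j)^{w_j}$.

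The remaining ingredient is the pointwise limit $\lim_{p\searrow 0}\Lambda(\omega; \mathbb{A}^p)^{1/p} = \exp(\sum_j w_j \log A_j)$, which is already invoked in the proof of Proposition \ref{P:power-means} and follows from the Lie--Trotter-type convergence for the Cartan mean established in \cite{LP12, LL14}. Assembling the three pieces produces the claimed $\nearrow_{\prec_{\log}}$ convergence. The main obstacle is precisely this limit: the monotonicity and the log-Euclidean upper bound reduce cleanly to Lemma \ref{L:homo-mean} and the Ando--Hiai framework, but identifying the limit requires the genuine analytic input, either a Lie--Trotter formula or a perturbative analysis of the Karcher equation as all $A_j^p$ concentrate near the identity.
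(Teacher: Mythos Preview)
Your proposal is correct and follows essentially the same route as the paper: both use the Ando--Hiai inequality \eqref{E:Cartan} together with Lemma \ref{L:homo-mean} (plus antisymmetric tensor preservation) for the monotonicity step, and invoke the known log-majorization by and Lie--Trotter convergence to the log-Euclidean mean. The paper cites \cite{BJL19} for both the limit and the log-majorization upper bound, whereas you attribute the limit to \cite{LP12, LL14}; and you make explicit the determinant check needed to upgrade $\prec_{w\log}$ to $\prec_{\log}$, which the paper's proof passes over.
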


\begin{proof}
Note from \cite{BJL19} that
\begin{displaymath}
\lim_{p \to 0} \Lambda(\omega; \mathbb{A}^{p})^{1/p} = \exp \left( \sum_{j=1}^{n} w_{j} \log A_{j} \right),
\end{displaymath}
and
\begin{displaymath}
\Lambda(\omega; \mathbb{A}^{p})^{1/p} \prec_{\log} \exp \left( \sum_{j=1}^{n} w_{j} \log A_{j} \right).
\end{displaymath}
So it is enough to show that $\Lambda(\omega; \mathbb{A}^{q})^{1/q} \prec_{\log} \Lambda(\omega; \mathbb{A}^{p})^{1/p}$ for $0 < p \leq q$.
By \eqref{E:Cartan}, if $\Lambda(\omega; \mathbb{A}) \leq I$ then $\Lambda(\omega; \mathbb{A}^{r}) \leq I$ for any $r \geq 1$ so $\Lambda(\omega; \mathbb{A}^{r})^{1/r} \leq I$.

Since the Cartan mean and $\Lambda(\omega; \mathbb{A}^{r})^{1/r}$ are preserved by the antisymmetric tensor power and homogeneous, from Lemma \ref{L:homo-mean} we have
$\Lambda(\omega; \mathbb{A}^{r})^{1/r} \prec_{\log} \Lambda(\omega; \mathbb{A})$ for all $r \geq 1$.
Letting $r = q/p$ for $0 < p \leq q$ and replacing $A_{j}$ by $A_{j}^{p}$ we obtain $\Lambda(\omega; \mathbb{A}^{q})^{1/q} \prec_{\log} \Lambda(\omega; \mathbb{A}^{p})^{1/p}$.
\end{proof}

\begin{remark}
Ando and Hiai \cite{An94, AH} have shown that $(A^{p} \#_{t} B^{p})^{1/p}$ converges increasingly to the log-Euclidean mean as $p \to 0^{+}$ with respect to the log-majorization:
\begin{displaymath}
(A^{p} \#_{t} B^{p})^{1/p} \nearrow_{\prec_{\log}} \exp ((1-t) \log A + t \log B) \quad \textrm{as} \quad p \searrow 0.
\end{displaymath}
Theorem \ref{T:Cartan} is a generalization of the Ando-Hiai's log-majorization result to multi-variable geometric mean, which is the Cartan mean.
\end{remark}

\begin{remark}
Since the Lim-P\'{a}lfia's power mean satisfies the arithmetic-power-harmonic mean inequalities:
\begin{displaymath}
\mathcal{H}(\omega; \mathbb{A}) = \left( \sum_{j=1}^{n} w_{j} A_{j}^{-1} \right)^{-1} \leq P_{t}(\omega; \mathbb{A}) \leq \sum_{j=1}^{n} w_{j} A_{j} = \mathcal{A}(\omega; \mathbb{A})
\end{displaymath}
for any nonzero $t \in [-1,1]$, it satisfies from \cite[Theorem 4.2]{HK17}
\begin{displaymath}
\lim_{p \to 0} P_{t}(\omega; \mathbb{A}^{p})^{1/p} = \exp \left( \sum_{j=1}^{n} w_{j} \log A_{j} \right).
\end{displaymath}
Moreover, $\displaystyle P_{t}(\omega; \mathbb{A}^{p})^{1/p} \prec_{w \log} \exp \left( \sum_{j=1}^{n} w_{j} \log A_{j} \right)$ for $t \in [-1,0)$ by Proposition \ref{P:power-means}. One can naturally ask that the Lim-P\'{a}lfia's power mean $P_{t}(\omega; \mathbb{A}^{p})^{1/p}$ for $t \in [-1,0)$ converges increasingly to the log-Euclidean mean as $p \to 0^{+}$ with respect to the weak log-majorization. In order to show this, it remains an open problem as follows: for $0 < p \leq q$
\begin{displaymath}
P_{t}(\omega; \mathbb{A}^{q})^{1/q} \prec_{w \log} P_{t}(\omega; \mathbb{A}^{p})^{1/p}.
\end{displaymath}
\end{remark}

\section{Log-majorization of the $t$-$z$ R\'{e}nyi right mean}

Let $A, B \in \mathbb{P}_{m}$. For $0 \leq t \leq 1$ and $z > 0$
\begin{displaymath}
Q_{t,z}(A, B) = \left( A^{\frac{1-t}{2z}} B^{\frac{t}{z}} A^{\frac{1-t}{2z}} \right)^{z}
\end{displaymath}
is the matrix version of the $t$-$z$ R\'{e}nyi relative entropy \cite{AD, MO}. Especially, $Q_{t,t}(A, B)$ is known as the sandwiched R\'{e}nyi relative entropy \cite{WWY}. This can be considered as a non-commutative version of geometric mean in the sense that $Q_{t,z}(A, B) = A^{1-t} B^{t}$ for commuting $A$ and $B$. From this point of view it is interesting to find a log-majorization relation between $Q_{t,z}(A, B)$ and $A^{1/2} B^{1/2}$.

\begin{theorem}
Let $A, B \in \mathbb{P}_{m}$. For $0 \leq t \leq 1/2$ and $t \leq z \leq 1$,
\begin{itemize}
  \item[(i)] $\lambda (Q_{t,z}(A, B)) \prec_{\log} s(A^{1/2} B^{1/2})$, and
  \item[(ii)] $s(A^{t - \frac{1}{2}} Q_{t,z}(A, B) B^{\frac{1}{2} - t}) \prec_{\log} s(A^{1/2} B^{1/2})$.
\end{itemize}
\end{theorem}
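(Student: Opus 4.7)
The plan is to combine the antisymmetric tensor power technique of Section~2 with an Araki--Lieb--Thirring (ALT) type log-majorization. By the multiplicativity of $\Lambda^{k}$ in Lemma~\ref{L:anti-tensor},
\[
\Lambda^{k} Q_{t,z}(A,B) = Q_{t,z}(\Lambda^{k}A,\Lambda^{k}B), \qquad \Lambda^{k}(A^{1/2}B^{1/2}) = (\Lambda^{k}A)^{1/2}(\Lambda^{k}B)^{1/2}.
\]
Combined with $\prod_{i=1}^{k}\lambda_{i}(Y) = \|\Lambda^{k}Y\|$ for $Y \geq 0$ and the analogous identity for singular values, (i) and (ii) reduce in every dimension to the single-matrix operator-norm bounds
\[
\|Q_{t,z}(A,B)\| \leq \|A^{1/2}B^{1/2}\|, \qquad \|A^{t-1/2}Q_{t,z}(A,B)B^{1/2-t}\| \leq \|A^{1/2}B^{1/2}\|,
\]
together with the determinant equality at $k = m$. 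For (ii) this equality is automatic: $\det Q_{t,z}(A,B) = \det(A)^{1-t}\det(B)^{t}$ gives $\det(A^{t-1/2}Q_{t,z}(A,B)B^{1/2-t}) = \det(A)^{1/2}\det(B)^{1/2}$.

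For the operator-norm estimate I would use $\|P^{z}\| = \|P\|^{z}$ for $P > 0$ together with the $C^{*}$-identity $\|XYX\| = \|XY^{1/2}\|^{2}$ for Hermitian $X$ and $Y \geq 0$ to arrive at
\[
\|Q_{t,z}(A,B)\| = \|A^{(1-t)/(2z)} B^{t/(2z)}\|^{2z}.
\]
The ALT log-majorization $s(X^{r}Y^{r})^{1/r} \prec_{\log} s(XY)$ for positive $X,Y$ and $r \in (0,1]$ can then be invoked to move the interior exponents outward. The hypotheses $0 \leq t \leq 1/2$ and $t \leq z \leq 1$ force $(1-t)/(2z)$ and $t/(2z)$ to lie in $(0, 1/2]$, which is precisely the ALT regime; iterating ALT with an intermediate power interpolation should transport both exponents to $1/2$ on each of $A$ and $B$.

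For (ii), I would pass to the squared singular values, which are the eigenvalues of the positive sandwich $B^{1/2-t}Q_{t,z}(A,B)A^{2t-1}Q_{t,z}(A,B)B^{1/2-t}$. Cyclic invariance of the spectrum together with an expansion of $Q_{t,z}$ produces an expression of exactly the ALT form handled above, but with the outer exponents on $A$ and $B$ already balanced at $1/2$; the free determinant equality then closes the argument. The main obstacle I anticipate is calibrating the ALT interpolation so that the two unequal exponents $(1-t)/(2z)$ and $t/(2z)$ are transported to $1/2$ simultaneously while respecting the external power $2z$; for (i) the determinant equality required for full log-majorization is the delicate point, whereas for (ii) the cyclic correctors $A^{t-1/2}$ and $B^{1/2-t}$ restore that equality automatically, making (ii) the cleaner of the two statements.
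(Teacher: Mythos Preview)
Your reduction via antisymmetric tensor powers to an operator-norm inequality is the same first move the paper makes, and your observation that (ii) automatically has the correct determinant while (i) does not is accurate (indeed the paper's own argument really yields only $\prec_{w\log}$ in (i)). The trouble is the second half of your plan.

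The claim that the hypotheses force $(1-t)/(2z)\in(0,1/2]$ is false: one would need $1-t\le z$, but only $t\le z$ is assumed. For instance $t=z=1/4$ gives $(1-t)/(2z)=3/2$. So the ``ALT regime'' you invoke does not apply to the $A$-exponent. More seriously, the standard Araki--Lieb--Thirring inequality compares $\|X^{r}Y^{r}\|$ with $\|XY\|^{r}$ for a \emph{common} exponent $r$; it says nothing about $\|X^{p}Y^{q}\|$ versus $\|X^{1/2}Y^{1/2}\|$ when $p\neq q$. There is no iteration of ALT that transports two unequal exponents to $1/2$ simultaneously while keeping the inequality pointing the right way, and the phrase ``intermediate power interpolation should transport both exponents'' does not correspond to any concrete operator inequality. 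The same objection applies to your sketch for (ii): expanding $B^{1/2-t}Q_{t,z}A^{2t-1}Q_{t,z}B^{1/2-t}$ with $Q_{t,z}=(A^{(1-t)/(2z)}B^{t/z}A^{(1-t)/(2z)})^{z}$ does not collapse to a matched-exponent ALT form.

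The paper's route is different and bypasses ALT entirely. After the same tensor-power reduction it proves the implication $A^{1/2}BA^{1/2}\le I\Rightarrow Q_{t,z}(A,B)\le I$ via the L\"owner--Heinz inequality: from $A^{1/2}BA^{1/2}\le I$ one has $B\le A^{-1}$, hence $B^{t/z}\le A^{-t/z}$ since $t/z\in(0,1]$, and substituting gives
\[
Q_{t,z}(A,B)\le\bigl(A^{(1-t)/(2z)}A^{-t/z}A^{(1-t)/(2z)}\bigr)^{z}=A^{1-2t}.
\]
The final step $A^{1-2t}\le I$ needs $A\le I$, which the paper obtains by first assuming $B\ge I$ (so $A\le B^{-1}\le I$) and then removing that assumption via the homogeneity scaling $B\mapsto\lambda_{m}(B)^{-1}B$. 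Part (ii) is handled by the same mechanism: $B^{1-2t}\le A^{2t-1}$ from L\"owner--Heinz, then a sandwich estimate reduces everything to $A^{t-1/2}Q_{t,z}(A,B)A^{t-1/2}\le I$. To repair your argument you should drop the ALT idea and use operator monotonicity of $x\mapsto x^{r}$ for $r\in[0,1]$ in this way instead.
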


\begin{proof}
Note that $s(A^{1/2} B^{1/2}) = \lambda((A^{1/2} B A^{1/2})^{1/2})$. Since $Q_{t,z}(A, B)$ and $(A^{1/2} B A^{1/2})^{1/2}$ are invariant under the antisymmetric tensor product and homogeneous, it is enough from Lemma \ref{L:homo-mean} to show that
\begin{itemize}
  \item[(i)] $A^{1/2} B A^{1/2} \leq I \qquad$ implies $\qquad Q_{t,z}(A, B) \leq I$,
  \item[(ii)] $A^{1/2} B A^{1/2} \leq I \qquad$ implies $\qquad A^{t - \frac{1}{2}} Q_{t,z}(A, B) B^{1-2t} Q_{t,z}(A, B) A^{t - \frac{1}{2}} \leq I$.
\end{itemize}

Let $0 \leq t \leq 1/2$ and $t \leq z \leq 1$.

\noindent (i) We first prove it when $B \geq I$. Assuming that $A^{1/2} B A^{1/2} \leq I$, we have $B \leq A^{-1}$ so $B^{\frac{t}{z}} \leq A^{-\frac{t}{z}}$ by the Loewner-Heinz inequality with $0 < t \leq z \leq 1$. Then
\begin{displaymath}
Q_{t,z}(A, B) \leq \left( A^{\frac{1-t}{2z}} A^{\frac{t}{z}} A^{\frac{1-t}{2z}} \right)^{z} = A^{1-2t} \leq I
\end{displaymath}
since $A \leq B^{-1} \leq I$ and $1-2t \geq 0$. So (i) holds when $B \geq I$.

Let $\lambda_{m} := \min \{ \lambda_{i}(B): 1 \leq i \leq m \}$. Then $\lambda_{m}^{-1} B \geq I$. By the preceding argument
\begin{displaymath}
\begin{split}
& \lambda_{m}^{-1} Q_{t,z}(A, B) = Q_{t,z}(\lambda_{m}^{-1} A, \lambda_{m}^{-1} B) \\
& \prec_{\log} ((\lambda_{m}^{-1} A)^{1/2} (\lambda_{m}^{-1} B) (\lambda_{m}^{-1} A)^{1/2})^{1/2} = \lambda_{m}^{-1} (A^{1/2} B A^{1/2})^{1/2},
\end{split}
\end{displaymath}
which completes the proof of (i).

\noindent (ii) Assume that $A^{1/2} B A^{1/2} \leq I$. Then $B \leq A^{-1}$, and $B^{1-2t} \leq A^{2t-1}$ by the Loewner-Heinz inequality since $2t \in [0,1]$. Therefore we have
\begin{displaymath}
\begin{split}
A^{t - \frac{1}{2}} Q_{t,z}(A, B) B^{1-2t} Q_{t,z}(A, B) A^{t - \frac{1}{2}} & \leq A^{t - \frac{1}{2}} Q_{t,z}(A, B) A^{2t-1} Q_{t,z}(A, B) A^{t - \frac{1}{2}} \\
& = \left( A^{t - \frac{1}{2}} Q_{t,z}(A, B) A^{t - \frac{1}{2}} \right)^{2}.
\end{split}
\end{displaymath}
Since $B \leq A^{-1}$, we obtain $Q_{t,z}(A, B) \leq A^{1-2t}$ by the Loewner-Heinz inequality since $0 \leq t \leq z \leq 1$. So
\begin{displaymath}
A^{t - \frac{1}{2}} Q_{t,z}(A, B) A^{t - \frac{1}{2}} \leq I,
\end{displaymath}
and thus, $A^{t - \frac{1}{2}} Q_{t,z}(A, B) B^{1-2t} Q_{t,z}(A, B) A^{t - \frac{1}{2}} \leq I$. Moreover,
\begin{displaymath}
\det \left[ A^{t - \frac{1}{2}} Q_{t,z}(A, B) B^{1-2t} Q_{t,z}(A, B) A^{t - \frac{1}{2}} \right] = \det (A B) = \det (A^{1/2} B A^{1/2}),
\end{displaymath}
and hence, (ii) holds for $t \in [0, 1/2]$.
\end{proof}

The $t$-$z$ R\'{e}nyi right mean $\Omega_{t,z}$ is defined as
\begin{displaymath}
\Omega_{t,z}(\omega; \mathbb{A}) = \underset{X \in \mathbb{P}_{m}}{\arg \min} \sum_{j=1}^{n} w_{j} \Phi_{t,z}(A_{j}, X).
\end{displaymath}
Since the map $A \in \mathbb{P}_{m} \mapsto \tr A^{t}$ for $t \in (0,1)$ is strictly concave, the map $X \in \mathbb{P}_{m} \mapsto \tr \Phi_{t,z}(A,X)$ is strictly concave for $0 < t \leq z < 1$. So one can see that $\Omega_{t,z}(\omega; \mathbb{A})$ coincides with the unique positive definite solution of the matrix nonlinear equation
\begin{equation} \label{E:Renyi}
X = \sum_{j=1}^{n} w_{j} Q_{1-t, z}(X, A_{j}).
\end{equation}
Note that \eqref{E:Renyi} is equivalent to
\begin{equation} \label{E:matrix equation}
X^{1 - \frac{t}{z}} = \sum_{j=1}^{n} w_{j} X^{- \frac{t}{z}} \#_{z} A_{j}^{\frac{1-t}{z}}.
\end{equation}
See \cite{DLVV,HJK21,JHK} for more details.

\begin{theorem} \cite[Theorem 3.2]{JHK} \label{T:A-R ineq 2}
Let $0 < t \leq z < 1$. If $\Omega_{t,z}(\omega; \mathbb{A}) \leq I$ then
\begin{displaymath}
\Omega_{t,z}(\omega; \mathbb{A})^{1 - \frac{t}{z}} \geq \mathcal{A}(\omega; \mathbb{A}^{1-t}).
\end{displaymath}
If $\Omega_{t,z}(\omega; \mathbb{A}) \geq I$ then the reverse inequality holds.
\end{theorem}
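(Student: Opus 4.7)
The plan is to read off the claim directly from the fixed-point characterization \eqref{E:matrix equation} of $\Omega_{t,z}$, using only monotonicity of the weighted geometric mean and the Loewner--Heinz inequality. Set $X = \Omega_{t,z}(\omega;\mathbb{A})$, so that
\begin{displaymath}
X^{1-\frac{t}{z}} = \sum_{j=1}^{n} w_{j}\, X^{-\frac{t}{z}} \#_{z} A_{j}^{\frac{1-t}{z}}.
\end{displaymath}
The whole argument will hinge on estimating each summand against $A_{j}^{1-t}$.

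First I would record the identity $I \#_{z} A_{j}^{(1-t)/z} = \bigl(A_{j}^{(1-t)/z}\bigr)^{z} = A_{j}^{1-t}$, which follows immediately from the definition of $\#_{z}$. Next, assume $X \leq I$. Since $t/z > 0$, the Loewner--Heinz inequality (or just functional calculus, as $-t/z < 0$) gives $X^{-t/z} \geq I$. Monotonicity of the weighted geometric mean in its first argument then yields
\begin{displaymath}
X^{-\frac{t}{z}} \#_{z} A_{j}^{\frac{1-t}{z}} \;\geq\; I \#_{z} A_{j}^{\frac{1-t}{z}} \;=\; A_{j}^{1-t}
\end{displaymath}
for every $j$. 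Taking the convex combination with weights $w_{j}$ and invoking \eqref{E:matrix equation} gives
\begin{displaymath}
X^{1-\frac{t}{z}} \;=\; \sum_{j=1}^{n} w_{j}\, X^{-\frac{t}{z}} \#_{z} A_{j}^{\frac{1-t}{z}} \;\geq\; \sum_{j=1}^{n} w_{j} A_{j}^{1-t} \;=\; \mathcal{A}(\omega; \mathbb{A}^{1-t}),
\end{displaymath}
which is the desired inequality. The case $X \geq I$ is perfectly symmetric: now $X^{-t/z} \leq I$, the monotonicity of $\#_{z}$ reverses each inequality, and summing gives $X^{1-t/z} \leq \mathcal{A}(\omega;\mathbb{A}^{1-t})$.

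There is essentially no obstacle here once one recognizes that the reformulation \eqref{E:matrix equation} already does the heavy lifting; the only subtle point is making sure $0 < t \leq z < 1$ is really all one needs, namely to guarantee that the exponent $z$ in $\#_{z}$ lies in $[0,1]$ (so $\#_{z}$ is a matrix mean with the requisite monotonicity) and that $-t/z$ is negative (so that $X \leq I \Rightarrow X^{-t/z} \geq I$, and conversely). Both are automatic from the hypotheses, so no case splitting on $t$ versus $z$ is needed.
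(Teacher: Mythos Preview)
Your proof is correct. Note that the paper itself does not give a proof of this statement: it is quoted as \cite[Theorem~3.2]{JHK}, so there is no in-paper argument to compare against. Your approach via the reformulation \eqref{E:matrix equation} is clean and self-contained: the only ingredients are the identity $I \#_{z} B = B^{z}$, the operator monotonicity of $A \mapsto A \#_{z} C$ in the first variable for $z \in [0,1]$, and the fact that $X \leq I$ with $t/z > 0$ forces $X^{-t/z} \geq I$. All of these hold under the hypothesis $0 < t \leq z < 1$, exactly as you observe, and the reverse case is indeed symmetric.
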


\begin{theorem} \cite[Theorem 13]{DLV} \label{T:Lower&UpperBound of R}
Let $0 < t \leq z < 1$. Then we have
\begin{displaymath}
\frac{1+z-t}{1-t}I - \frac{z}{1-t} \sum_{j=1}^{n} w_j A_j^{-\frac{1-t}{z}} \leq \Omega_{t,z}(\omega; \mathbb{A}) \leq \left( \frac{1+z-t}{1-t}I - \frac{z}{1-t} \sum_{j=1}^{n} w_j A_j^{\frac{1-t}{z}} \right)^{-1},
\end{displaymath}
where the second inequality holds when $\displaystyle(1+z-t)I - z\sum_{j=1}^{n} w_j A_j^{\frac{1-t}{z}}$ is invertible.
\end{theorem}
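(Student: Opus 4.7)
The plan is to sandwich $X = \Omega_{t,z}(\omega; \mathbb{A})$ between two tangent-line type bounds derived from the equivalent fixed-point equation \eqref{E:matrix equation}, $X^{1-t/z} = \sum_{j=1}^{n} w_j X^{-t/z} \#_{z} A_j^{(1-t)/z}$, combined with scalar convexity/minimization arguments transported to the operator level via functional calculus in $X$.

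For the lower bound I would first apply the harmonic--geometric mean inequality $A \#_{z} B \geq ((1-z) A^{-1} + z B^{-1})^{-1}$ term by term to the right-hand side of the fixed-point equation, then use the operator convexity of inversion (i.e.\ $\sum_j w_j Y_j^{-1} \geq (\sum_j w_j Y_j)^{-1}$) to consolidate the weighted sum inside a single inverse, and finally take inverses. This produces the operator inequality $X^{(t-z)/z} - (1-z) X^{t/z} \leq z \sum_{j=1}^{n} w_j A_j^{-(1-t)/z}$. Next I would establish the scalar inequality $\phi(\lambda) := (1-t)\lambda + \lambda^{(t-z)/z} - (1-z)\lambda^{t/z} \geq 1+z-t$ for all $\lambda > 0$ by checking $\phi(1) = 1+z-t$, $\phi'(1) = 0$, and observing that the explicit factorization $\phi''(\lambda) = \frac{t-z}{z^{2}} \lambda^{(t-3z)/z} \bigl[(t-2z) - (1-z)t\lambda\bigr]$ is nonnegative for $\lambda > 0$, since $t-z \leq 0$ and $t - 2z - (1-z)t\lambda < 0$ when $0 < t \leq z < 1$. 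Functional calculus on $X$ then yields $\phi(X) \geq (1+z-t) I$, and combining this with the previous operator inequality rearranges directly into the lower bound.

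For the upper bound the analogous scheme uses the weighted arithmetic--geometric inequality $A \#_{z} B \leq (1-z) A + z B$, giving $X^{(z-t)/z} - (1-z) X^{-t/z} \leq z \sum_{j=1}^{n} w_j A_j^{(1-t)/z}$. The companion scalar inequality needed is $\psi(\lambda) := (1-t)\lambda^{-1} + \lambda^{(z-t)/z} - (1-z)\lambda^{-t/z} \geq 1+z-t$ for $\lambda > 0$. Once this is in hand, functional calculus delivers $\psi(X) \geq (1+z-t) I$, and rearrangement produces $(1-t) X^{-1} \geq (1+z-t) I - z \sum_{j=1}^{n} w_j A_j^{(1-t)/z}$; inverting both sides under the hypothesis that the right-hand operator is positive definite (the stated invertibility condition) then yields the claimed upper bound.

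The main obstacle will be the scalar inequality for $\psi$, because $\psi''$ changes sign for certain ranges of $(t,z)$ and the straightforward convexity argument used for $\phi$ does not carry over. I would instead proceed by uniqueness of critical points: check $\psi(1) = 1+z-t$ and $\psi'(1) = 0$, then multiply $\psi'(\lambda) = 0$ by $\lambda^{(t+z)/z}$ to obtain $\frac{z-t}{z}\lambda^{(2z-t)/z} + \frac{(1-z)t}{z}\lambda^{(z-t)/z} = 1-t$, whose left-hand side is strictly increasing in $\lambda > 0$ (when $t<z$, both coefficients are nonnegative and both exponents are strictly positive). Hence $\lambda = 1$ is the unique positive critical point, and combined with $\psi(\lambda) \to +\infty$ as $\lambda \to 0^{+}$ and as $\lambda \to \infty$, this identifies $\lambda = 1$ as the global minimum, giving $\psi \geq 1+z-t$. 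The degenerate case $t = z$ collapses both $\phi$ and $\psi$ identically to the constant $1+z-t$, so the bounds hold there trivially.
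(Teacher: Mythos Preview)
The paper does not prove this theorem; it is quoted verbatim from \cite[Theorem 13]{DLV} and used as a black box in the proof of Theorem~\ref{T:Renyi-power}. So there is no in-paper proof to compare against, and your argument should be judged on its own.

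Your scheme is sound and complete. The lower bound goes through exactly as you describe: the harmonic--geometric inequality followed by operator convexity of inversion yields $X^{(t-z)/z}-(1-z)X^{t/z}\le z\sum_j w_j A_j^{-(1-t)/z}$, and your convexity analysis of $\phi$ is correct (the sign check $\frac{t-z}{z^2}\le 0$ and $(t-2z)-(1-z)t\lambda<0$ is valid for $0<t\le z<1$ and $\lambda>0$). For the upper bound, the arithmetic--geometric step and the scalar inequality $\psi\ge 1+z-t$ are also correct, and your uniqueness-of-critical-point argument works.

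Two small corrections. First, the multiplier you apply to $\psi'(\lambda)=0$ should be $\lambda^{2}$, not $\lambda^{(t+z)/z}$; multiplying by $\lambda^{2}$ is what produces the equation $\frac{z-t}{z}\lambda^{(2z-t)/z}+\frac{(1-z)t}{z}\lambda^{(z-t)/z}=1-t$ that you then analyze. Second, at the final inversion step you need $(1+z-t)I - z\sum_j w_j A_j^{(1-t)/z}$ to be \emph{positive definite}, not merely invertible, to pass from $(1-t)X^{-1}\ge B$ to $X\le (1-t)B^{-1}$; the theorem's hypothesis as stated is slightly weaker than what the proof actually uses, but this is a quirk of the cited statement rather than a flaw in your argument, and in the paper's only application (Theorem~\ref{T:Renyi-power}) one has $\sum_j w_j A_j^{(1-t)/z}\le I$, which forces positive definiteness anyway.
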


\begin{theorem} \label{T:Renyi-power}
For $0 < t \leq z < 1$,
\begin{displaymath}
\Vert P_{1-t}(\omega; \mathbb{A}) \Vert \leq \Vert \Omega_{t,z}(\omega; \mathbb{A}) \Vert \leq \Vert \mathcal{Q}_{\frac{1-t}{z}}(\omega;\mathbb{A}) \Vert.
\end{displaymath}
Furthermore, $\displaystyle \Vert \mathcal{Q}_{\frac{t-1}{z}}(\omega;\mathbb{A}) \Vert \leq \Vert \Omega_{t,z}(\omega; \mathbb{A}) \Vert$.
\end{theorem}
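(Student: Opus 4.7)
The plan is to apply Lemma \ref{L:homo-mean} three separate times---once to each of the three claimed norm inequalities. All four means appearing in the statement ($P_{1-t}$, $\Omega_{t,z}$, $\mathcal{Q}_{(1-t)/z}$, $\mathcal{Q}_{(t-1)/z}$) are homogeneous, so each inequality $\Vert G_{1}(\omega;\mathbb{A}) \Vert \leq \Vert G_{2}(\omega;\mathbb{A}) \Vert$ will reduce to checking the single implication ``$G_{2}(\omega;\mathbb{A}) \leq I$ implies $G_{1}(\omega;\mathbb{A}) \leq I$''.

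The outer two inequalities $\Vert \Omega_{t,z} \Vert \leq \Vert \mathcal{Q}_{(1-t)/z} \Vert$ and $\Vert \mathcal{Q}_{(t-1)/z} \Vert \leq \Vert \Omega_{t,z} \Vert$ both follow from Theorem \ref{T:Lower&UpperBound of R}. For the former, if $\mathcal{Q}_{(1-t)/z}(\omega;\mathbb{A}) \leq I$ then, since $(1-t)/z > 0$, one has $S := \sum_{j} w_{j} A_{j}^{(1-t)/z} \leq I$, hence
\begin{displaymath}
\frac{1+z-t}{1-t} I - \frac{z}{1-t} S \ \geq \ \frac{1+z-t-z}{1-t} I \ = \ I,
\end{displaymath}
and the upper bound in Theorem \ref{T:Lower&UpperBound of R} yields $\Omega_{t,z}(\omega;\mathbb{A}) \leq I$. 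For the latter, if $\Omega_{t,z}(\omega;\mathbb{A}) \leq I$ then the lower bound in Theorem \ref{T:Lower&UpperBound of R} similarly forces $\sum_{j} w_{j} A_{j}^{-(1-t)/z} \geq I$, and the negative exponent $z/(t-1)$ turns this into $\mathcal{Q}_{(t-1)/z}(\omega;\mathbb{A}) \leq I$.

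For the remaining inequality $\Vert P_{1-t} \Vert \leq \Vert \Omega_{t,z} \Vert$, I would start from $\Omega_{t,z}(\omega;\mathbb{A}) \leq I$ and apply Theorem \ref{T:A-R ineq 2}: since $1 - t/z \geq 0$,
\begin{displaymath}
\sum_{j=1}^{n} w_{j} A_{j}^{1-t} \ = \ \mathcal{A}(\omega;\mathbb{A}^{1-t}) \ \leq \ \Omega_{t,z}(\omega;\mathbb{A})^{1-t/z} \ \leq \ I.
\end{displaymath}
The closing step is to upgrade this to $P_{1-t}(\omega;\mathbb{A}) \leq I$. I would exploit the defining map $T(X) := \sum_{j} w_{j} (X \#_{1-t} A_{j})$ of the Lim-P\'alfia power mean: the displayed inequality is exactly $T(I) \leq I$, and since $T$ is monotone in $X$ (the weighted geometric mean is monotone in its first argument), the Picard iterates $T^{k}(I)$ form a decreasing sequence in $\mathbb{P}_{m}$ bounded above by $I$; the Lim--P\'alfia convergence theorem \cite{LP12} identifies the limit as the unique fixed point $P_{1-t}(\omega;\mathbb{A})$, which therefore satisfies $P_{1-t}(\omega;\mathbb{A}) \leq I$.

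The main obstacle is this final step: the passage from $\sum_{j} w_{j} A_{j}^{1-t} \leq I$ to $P_{1-t}(\omega;\mathbb{A}) \leq I$ admits no one-line algebraic derivation and relies on the monotone convergence of the Picard iteration for the Lim--P\'alfia fixed-point equation. Everything else in the argument is routine rearrangement of the explicit bounds supplied by Theorem \ref{T:Lower&UpperBound of R} and Theorem \ref{T:A-R ineq 2}.
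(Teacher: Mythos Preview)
Your proposal is correct and follows the same structure as the paper's proof: reduce each inequality via Lemma~\ref{L:homo-mean}, then use Theorem~\ref{T:A-R ineq 2} for the first implication and the two bounds of Theorem~\ref{T:Lower&UpperBound of R} for the other two. The only divergence is the step you flag as the ``main obstacle'': the paper dispatches the implication $\sum_{j} w_{j} A_{j}^{1-t} \leq I \Rightarrow P_{1-t}(\omega;\mathbb{A}) \leq I$ in one line by citing \cite[Theorem~3.1]{LY13}, whereas you reprove it via monotone Picard iteration---which is essentially how that cited result is established, so there is no real difference in substance.
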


\begin{proof}
Let $0 < t \leq z < 1$. % The first weak log-majorization can be proved by Proposition \ref{P:power-means}.
Since the R\'{e}nyi right mean $\Omega_{t,z}$, power mean $P_{1-t}$ and quasi-arithmetic mean $\mathcal{Q}_{\frac{1-t}{z}}$ are all homogeneous, it is enough from Lemma \ref{L:homo-mean} to show that for each cases
\begin{center}
$\Omega_{t,z}(\omega; \mathbb{A}) \leq I \quad$ implies $\quad P_{1-t}(\omega; \mathbb{A}) \leq I$, \\
$\mathcal{Q}_{\frac{1-t}{z}}(\omega;\mathbb{A}) \leq I \quad$ implies $\quad \Omega_{t,z}(\omega; \mathbb{A}) \leq I$.
\end{center}

By Theorem \ref{T:A-R ineq 2}, $\Omega_{t,z}(\omega; \mathbb{A}) \leq I$ implies that
\begin{displaymath}
\sum_{j=1}^{n} w_{j} A_{j}^{1-t} \leq \Omega_{t,z}(\omega; \mathbb{A})^{1 - \frac{t}{z}} \leq I,
\end{displaymath}
and hence, $\displaystyle \mathcal{Q}_{1-t}(\omega; \mathbb{A}) = \left( \sum_{j=1}^{n} w_{j} A_{j}^{1-t} \right)^{\frac{1}{1-t}} \leq I$.
By \cite[Theorem 3.1]{LY13} $P_{1-t}(\omega; \mathbb{A}) \leq I$.
Next, we assume $\mathcal{Q}_{\frac{1-t}{z}}(\omega;\mathbb{A}) \leq I$.
Then $\displaystyle\sum_{j=1}^{n} w_j A_j^{\frac{1-t}{z}} \leq I$ so one can see that
\begin{align*}
\frac{1+z-t}{1-t}I - \frac{z}{1-t} \sum_{j=1}^{n} w_j A_j^{\frac{1-t}{z}} \geq I.
\end{align*}
By assumption, the second inequality in Theorem \ref{T:Lower&UpperBound of R} holds, and hence, we have
\begin{displaymath}
\Omega_{t,z}(\omega; \mathbb{A})^{\frac{1-t}{z}} \leq \left( \frac{1+z-t}{1-t}I - \frac{z}{1-t} \sum_{j=1}^{n} w_j A_j^{\frac{1-t}{z}} \right)^{-1} \leq I.
\end{displaymath}

Moreover, assuming that $\Omega_{t,z}(\omega; \mathbb{A}) \leq I$ yields
\begin{displaymath}
\frac{1+z-t}{1-t}I - \frac{z}{1-t} \sum_{j=1}^{n} w_j A_j^{-\frac{1-t}{z}} \leq I
\end{displaymath}
by Theorem \ref{T:Lower&UpperBound of R}. Then $\displaystyle \sum_{j=1}^{n} w_j A_j^{\frac{t-1}{z}} \geq I$, and hence, $\displaystyle \mathcal{Q}_{\frac{t-1}{z}}(\omega;\mathbb{A}) \leq I$ since $t \in (0,1)$. This completes the proof.
\end{proof}

\section{Boundedness of the R\'{e}nyi power mean}

Another type of the R\'{e}nyi power mean has been introduced in \cite{DF}, as a unique positive definite solution of the equation
\begin{equation} \label{E:Renyi-power}
X = \sum_{j=1}^{n} w_{j} Q_{t, z}(A_{j}, X) = \sum_{j=1}^{n} w_{j} \left( A_{j}^{\frac{1-t}{2z}} X^{\frac{t}{z}} A_{j}^{\frac{1-t}{2z}} \right)^{z}.
\end{equation}
We denote it as $\mathcal{R}_{t,z}(\omega; \mathbb{A})$. We see the inequalities between the R\'{e}nyi power mean and quasi-arithmetic mean by using Jensen type inequalities.

\begin{theorem} \label{T:R-Q}
Let $0 < t \leq z < 1$. If $\mathcal{R}_{t,z}(\omega; \mathbb{A}) \leq I$ then
\begin{displaymath}
\mathcal{R}_{t,z}(\omega; \mathbb{A}) \leq \mathcal{Q}_{\frac{1}{p}}(\omega; \mathbb{A}^{1-t}) = \left( \sum_{j=1}^{n} w_{j} A_{j}^{\frac{1-t}{p}} \right)^{p}
\end{displaymath}
for all $p$ such that $p \leq z$.
\end{theorem}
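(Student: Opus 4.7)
My plan is to use the defining fixed-point equation \eqref{E:Renyi-power} for $X := \mathcal{R}_{t,z}(\omega; \mathbb{A})$, exploit the hypothesis $X \leq I$ term by term to reduce to a $\mathcal{Q}_{1}$-type bound on $X$, and then upgrade that bound to the desired $\mathcal{Q}_{1/p}$ form via operator concavity of the $p$-th power map.

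First I would apply L\"owner--Heinz twice. From $X \leq I$ and $t/z \in (0,1]$ one gets $X^{t/z} \leq I$; conjugating by the positive matrix $A_j^{(1-t)/(2z)}$ yields
\[ A_j^{(1-t)/(2z)} X^{t/z} A_j^{(1-t)/(2z)} \leq A_j^{(1-t)/z}, \]
and a second application of L\"owner--Heinz with the operator monotone map $y \mapsto y^z$ (valid for $z \in (0,1)$) gives
\[ \bigl( A_j^{(1-t)/(2z)} X^{t/z} A_j^{(1-t)/(2z)} \bigr)^z \leq A_j^{1-t}. \]
Averaging against the probability weights $w_j$ and invoking \eqref{E:Renyi-power} collapses the left-hand side to $X$, so
\[ X \leq \sum_{j=1}^n w_j A_j^{1-t}. \]

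Next I would transport this $\mathcal{Q}_1$-bound into the desired $\mathcal{Q}_{1/p}$-bound by operator concavity. Since $p \leq z < 1$, the function $y \mapsto y^p$ on $\mathbb{P}_m$ is operator concave, so setting $B_j := A_j^{(1-t)/p}$ the operator Jensen inequality gives
\[ \Bigl( \sum_{j=1}^n w_j A_j^{(1-t)/p} \Bigr)^p = \Bigl( \sum_j w_j B_j \Bigr)^p \geq \sum_j w_j B_j^p = \sum_{j=1}^n w_j A_j^{1-t}. \]
Chaining with the previous display yields $X \leq \mathcal{Q}_{1/p}(\omega; \mathbb{A}^{1-t})$, which is what we wanted.

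I do not anticipate any serious obstacle: both L\"owner--Heinz and operator concavity of the $p$-th power for $p \in (0,1)$ are classical. The only point to monitor is that every exponent appearing in the chain should lie in $(0,1]$, so that one can use operator monotonicity or operator concavity without inverting the inequality; this is ensured by $0 < t \leq z < 1$ together with $0 < p \leq z$. In fact the same argument goes through for any $p \in (0,1]$, so the restriction $p \leq z$ stated in the theorem is slightly stronger than this strategy strictly needs.
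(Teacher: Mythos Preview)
Your proof is correct and follows essentially the same route as the paper: use $X \leq I$ and L\"owner--Heinz to bound each summand in the fixed-point equation by $A_j^{1-t}$, obtaining $X \leq \sum_j w_j A_j^{1-t}$, and then upgrade via operator concavity of a fractional power. The only cosmetic difference is that the paper first bounds by $\mathcal{Q}_{1/z}(\omega;\mathbb{A}^{1-t})$ using concavity of $y\mapsto y^z$ and then invokes the monotonicity $\mathcal{Q}_{1/z}\leq \mathcal{Q}_{1/p}$ for $1/p\geq 1/z\geq 1$ (quoted from \cite{Kim18}), whereas you apply concavity of $y\mapsto y^p$ directly; your version is slightly more streamlined and makes transparent, as you note, that any $p\in(0,1]$ works rather than just $p\leq z$.
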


\begin{proof}
Let $X = \mathcal{R}_{t,z}(\omega; \mathbb{A}) \leq I$ for $0 < t \leq z < 1$.
Since $X^{\frac{t}{z}} \leq I$, we have $A_j^{\frac{1-t}{2z}} X^{\frac{t}{z}} A_j^{\frac{1-t}{2z}} \leq A_j^{\frac{1-t}{z}}$ for each $j=1, \dots, n$.
Then from the equation \eqref{E:Renyi-power}
\begin{displaymath}
X = \sum_{j=1}^{n} w_j \left(A_j^{\frac{1-t}{2z}} X^{\frac{t}{z}} A_j^{\frac{1-t}{2z}}\right)^z
\leq \sum_{j=1}^{n} w_j A_j^{1-t}.
\end{displaymath}
Since the map $\mathbb{P}_m \ni A \mapsto A^z$ is concave, we obtain
\begin{displaymath}
X \leq \sum_{j=1}^{n} w_j A_j^{1-t} \leq \left[ \sum_{j=1}^{n} w_j A_j^{\frac{1-t}{z}} \right]^z = \mathcal{Q}_{\frac{1}{z}}(\omega;\mathbb{A}^{1-t}).
\end{displaymath}
Moreover, $\mathcal{Q}_p$ is monotone on $p \in (-\infty, -1] \cup [1, \infty)$ from \cite[Theorem 5.1]{Kim18} so
\begin{displaymath}
\mathcal{Q}_{\frac{1}{z}}(\omega;\mathbb{A}^{1-t}) \leq \mathcal{Q}_{\frac{1}{p}}(\omega; \mathbb{A}^{1-t}),
\end{displaymath}
for $0 < p \leq z < 1$. Hence, we completes the proof.
\end{proof}

\begin{lemma} \label{L:supplement}
Let $0 < t \leq z < 1$.
\begin{itemize}
\item[(1)] If $A_{j} \leq I$ for all $j$, then $\mathcal{R}_{t,z}(\omega; \mathbb{A}) \leq I$.
\item[(2)] If $A_{j} \geq I$ for all $j$, then $\mathcal{R}_{t,z}(\omega; \mathbb{A}) \geq I$.
\end{itemize}
\end{lemma}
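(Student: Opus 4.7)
The plan is to squeeze $X := \mathcal{R}_{t,z}(\omega; \mathbb{A})$ between scalar multiples of the identity and feed the bounds through the defining equation \eqref{E:Renyi-power} just once. For part (1), I would set $\kappa := \Vert X \Vert$ so that $X \leq \kappa I$. Since $t/z \in (0,1]$ and $z \in (0,1)$, two applications of the Loewner--Heinz inequality together with the positive conjugation by $A_j^{(1-t)/(2z)}$ give, for each $j$, the pointwise bound
$$\bigl(A_j^{(1-t)/(2z)}\, X^{t/z}\, A_j^{(1-t)/(2z)}\bigr)^{z} \leq \kappa^{t}\, A_j^{1-t}.$$
Summing over $j$ with weights $w_j$ and recognizing the left-hand sum as $X$ via \eqref{E:Renyi-power} would give $X \leq \kappa^t \sum_j w_j A_j^{1-t}$; the hypothesis $A_j \leq I$ combined with $1-t \in (0,1)$ makes the right-hand sum $\leq I$, so $X \leq \kappa^t I$. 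Taking the operator norm yields $\kappa \leq \kappa^t$, which since $1-t > 0$ forces $\kappa \leq 1$, i.e., $X \leq I$.

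Part (2) is run by exactly the same mechanism with reversed inequalities: I would replace $\kappa$ by the least eigenvalue $\mu := \lambda_m(X) > 0$, so $X \geq \mu I$, and chase the same three-step chain downward, using that the positive conjugation and both operator powers are order-preserving. The hypothesis $A_j \geq I$ together with $1-t > 0$ makes $\sum_j w_j A_j^{1-t} \geq I$, yielding $X \geq \mu^t I$, whence $\mu \geq \mu^t$, so $\mu \geq 1$ and $X \geq I$.

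There is no serious obstacle; the only point requiring care is verifying that the exponents land in the ranges where Loewner--Heinz applies. The inner exponent $(1-t)/(2z)$ enters only through a positive conjugation and needs no constraint, while the two operator-power exponents $t/z$ and $z$ both lie in $(0,1]$ precisely because $0 < t \leq z < 1$. The final elementary step $\kappa \leq \kappa^t \Rightarrow \kappa \leq 1$ (respectively $\mu \geq \mu^t \Rightarrow \mu \geq 1$) uses only that $1-t > 0$, so the same symmetric argument closes both cases.
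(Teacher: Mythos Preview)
Your argument is correct and is essentially the paper's own proof: both bound $X$ by $\lambda_1(X)I$ (your $\kappa$) or $\lambda_m(X)I$ (your $\mu$), feed this once through \eqref{E:Renyi-power} using the operator monotonicity of $(\cdot)^{t/z}$ and $(\cdot)^{z}$, and conclude $\lambda_1(X)\leq\lambda_1(X)^t$ (resp.\ $\lambda_m(X)\geq\lambda_m(X)^t$). The only cosmetic difference is that the paper phrases the last step as a contradiction (assuming $\lambda_1(X)>1$), whereas you argue directly that $\kappa^{1-t}\leq 1$ forces $\kappa\leq 1$.
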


\begin{proof}
Assume that $A_{j} \leq I$ for all $j$. Let $X = \mathcal{R}_{t,z}(\omega; \mathbb{A})$ for $0 < t \leq z < 1$. Suppose that $\lambda_{1}(X) > 1$. Since $X \leq \lambda_{1}(X) I$,
\begin{displaymath}
X = \sum_{j=1}^{n} w_{j} \left( A_{j}^{\frac{1-t}{2z}} X^{\frac{t}{z}} A_{j}^{\frac{1-t}{2z}} \right)^{z} \leq \lambda_{1}(X)^{t} \sum_{j=1}^{n} w_{j} A_{j}^{1-t} \leq \lambda_{1}(X)^{t} I.
\end{displaymath}
This inequality implies that $\lambda_{1}(X) \leq \lambda_{1}(X)^{t}$, which is a contradiction because $\lambda_{1}(X) > 1$ and $0 < t < 1$. So $\lambda_{1}(X) \leq 1$, equivalently $X \leq I$.

In order to prove (2), suppose that $\lambda_{m}(X) < 1$. Since $X \geq \lambda_{m}(X) I$, the similar argument as above yields $\lambda_{m}(X) \geq \lambda_{m}(X)^{t}$, but it is a contradiction. Thus, $\lambda_{m}(X) \geq 1$, equivalently $X \geq I$.
\end{proof}

In the following we denote as $\lambda_{M} := \max \{ \lambda_{1}(A_{j}): 1 \leq j \leq n \}$.
\begin{corollary}
Let $0 < t \leq z < 1$. Then for all $p$ such that $p \leq z$
\begin{displaymath}
\mathcal{R}_{t,z}(\omega; \mathbb{A}) \leq \lambda_{M}^{t} \mathcal{Q}_{\frac{1}{p}}(\omega; \mathbb{A}^{1-t}).
\end{displaymath}
\end{corollary}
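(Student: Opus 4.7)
The plan is to reduce the assertion to Theorem \ref{T:R-Q} by a rescaling argument. The key observation is that $\mathcal{R}_{t,z}$ is homogeneous: if $c>0$ and $X=\mathcal{R}_{t,z}(\omega;\mathbb{A})$, then substituting $cX$ on the left and $c\mathbb{A}$ on the right of the defining equation \eqref{E:Renyi-power} produces a factor $c^{z((1-t)/z+t/z)}=c$, so by the uniqueness of the solution we obtain $\mathcal{R}_{t,z}(\omega;c\mathbb{A})=c\,\mathcal{R}_{t,z}(\omega;\mathbb{A})$. The quasi-arithmetic mean $\mathcal{Q}_{1/p}$ is manifestly homogeneous as well.

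Next I would set $B_j := \lambda_M^{-1}A_j$, so that $A_j\leq \lambda_M I$ forces $B_j\leq I$ for every $j$. Lemma \ref{L:supplement}(1) then yields $\mathcal{R}_{t,z}(\omega;\mathbb{B})\leq I$, which is precisely the hypothesis required to apply Theorem \ref{T:R-Q} to $\mathbb{B}$. That theorem gives, for every $p\leq z$,
\[
\mathcal{R}_{t,z}(\omega;\mathbb{B}) \leq \mathcal{Q}_{1/p}(\omega;\mathbb{B}^{1-t}).
\]

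To finish, I would undo the scaling on both sides. Homogeneity of $\mathcal{R}_{t,z}$ rewrites the left-hand side as $\lambda_M^{-1}\mathcal{R}_{t,z}(\omega;\mathbb{A})$, while a direct computation of the quasi-arithmetic mean on the right gives $\mathcal{Q}_{1/p}(\omega;(\lambda_M^{-1}\mathbb{A})^{1-t}) = \lambda_M^{-(1-t)}\mathcal{Q}_{1/p}(\omega;\mathbb{A}^{1-t})$. Multiplying through by $\lambda_M$ then produces
\[
\mathcal{R}_{t,z}(\omega;\mathbb{A}) \leq \lambda_M^{t}\,\mathcal{Q}_{1/p}(\omega;\mathbb{A}^{1-t}),
\]
which is the claimed bound.

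I do not anticipate any substantive obstacle: the entire argument is a one-step rescaling that passes to the normalized tuple $\lambda_M^{-1}\mathbb{A}$, applies the two cited facts, and returns. The only point that must be verified with care is the homogeneity of $\mathcal{R}_{t,z}$, and that in turn reduces to the exponent identity $(1-t)/z+t/z=1/z$ combined with uniqueness of the solution to \eqref{E:Renyi-power}.
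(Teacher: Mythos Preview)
Your proof is correct and follows essentially the same approach as the paper: rescale by $\lambda_M^{-1}$ so that each $\lambda_M^{-1}A_j\le I$, invoke Lemma~\ref{L:supplement}(1) to get $\mathcal{R}_{t,z}(\omega;\lambda_M^{-1}\mathbb{A})\le I$, apply Theorem~\ref{T:R-Q}, and then use homogeneity of both $\mathcal{R}_{t,z}$ and $\mathcal{Q}_{1/p}$ to unwind the scaling. The only difference is that you spell out the verification of homogeneity of $\mathcal{R}_{t,z}$, whereas the paper simply asserts it.
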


\begin{proof}
Since $\lambda_{M}^{-1} A_{j} \leq I$ for all $j$, we have $\mathcal{R}_{t,z} \left( \omega; \lambda_{M}^{-1} \mathbb{A} \right) \leq I$ by Lemma \ref{L:supplement} (1). From Theorem \ref{T:R-Q} together with the homogeneity of the R\'{e}nyi power mean,
\begin{displaymath}
\lambda_{M}^{-1} \mathcal{R}_{t,z}(\omega; \mathbb{A}) = \mathcal{R}_{t,z} \left( \omega; \lambda_{M}^{-1} \mathbb{A} \right) \leq \left( \sum_{j=1}^{n} w_{j} (\lambda_{M}^{-1} A_{j})^{\frac{1-t}{p}} \right)^{p} = \lambda_{M}^{t-1} \left( \sum_{j=1}^{n} w_{j} A_{j}^{\frac{1-t}{p}} \right)^{p}.
\end{displaymath}
By simplifying the terms of $\lambda_{M}$ we complete the proof.
\end{proof}

\begin{theorem}
Let $0 < t \leq z < 1$. Then
\begin{displaymath}
\mathcal{R}_{t,z}(\omega; \mathbb{A})^{\frac{1-t}{2}} \geq \lambda_{M}^{-\frac{(1-t)(1-z)}{2z}} \sum_{j=1}^{n} w_{j} A_{j}^{\frac{1-t}{2z}}.
\end{displaymath}
\end{theorem}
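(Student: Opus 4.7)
The plan is to first reduce to the case $\lambda_{M} = 1$ by homogeneity, then derive a lower bound on $X^{(1-t)/2}$ directly from the defining equation using operator concavity and Hansen's inequality, and finally close the remaining gap. Rescaling the defining equation $X = \sum_{j} w_{j} (A_{j}^{(1-t)/(2z)} X^{t/z} A_{j}^{(1-t)/(2z)})^{z}$ verifies that $\mathcal{R}_{t,z}$ is homogeneous of degree $1$ (the total degree in $c$ inside the bracket is $1/z$, and the outer $z$-th power restores degree $1$), so both sides of the claim scale as $c^{(1-t)/2}$ under $\mathbb{A} \mapsto c\mathbb{A}$. Replacing $\mathbb{A}$ by $\lambda_{M}^{-1}\mathbb{A}$, it therefore suffices to treat the case $A_{j} \leq I$ for all $j$; Lemma \ref{L:supplement}(1) then forces $X := \mathcal{R}_{t,z}(\omega;\mathbb{A}) \leq I$, and the claim reduces to showing $X^{(1-t)/2} \geq \sum_{j} w_{j} A_{j}^{(1-t)/(2z)}$.

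With $\alpha = (1-t)/(2z)$ and $\beta = t/z$, the defining equation reads $X = \sum_{j} w_{j} B_{j}^{z}$ where $B_{j} = A_{j}^{\alpha} X^{\beta} A_{j}^{\alpha}$. Since $(1-t)/2 \in (0,1)$, operator concavity of $y \mapsto y^{(1-t)/2}$ on $\mathbb{P}_{m}$ gives $X^{(1-t)/2} \geq \sum_{j} w_{j} B_{j}^{z(1-t)/2}$. Since $A_{j} \leq I$ makes $V_{j} := A_{j}^{\alpha}$ a contraction and the exponent $p := z(1-t)/2$ lies in $(0,1)$, Hansen's inequality \eqref{E:Hansen2} yields $B_{j}^{p} = (V_{j} X^{\beta} V_{j})^{p} \geq V_{j} X^{p\beta} V_{j} = A_{j}^{\alpha} X^{t(1-t)/2} A_{j}^{\alpha}$. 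Summing produces the intermediate estimate
\begin{displaymath}
X^{(1-t)/2} \geq \sum_{j=1}^{n} w_{j} A_{j}^{\alpha} X^{t(1-t)/2} A_{j}^{\alpha}.
\end{displaymath}

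The hard part will be converting this sandwich estimate into the desired bound $X^{(1-t)/2} \geq \sum_{j} w_{j} A_{j}^{\alpha}$: the sandwich carries total $A_{j}$-exponent $2\alpha = (1-t)/z$ rather than $\alpha$, and the crude upper bound $X^{t(1-t)/2} \leq I$ from $X \leq I$ only shrinks the right-hand side further. Closing this gap will likely require a more refined use of the defining equation, for instance exploiting that $X$ is itself a fixed point of the R\'enyi-type map so $X^{t(1-t)/2}$ admits the same family of inequalities with exponent driven toward $0$, combined with the variance-type bound $\sum_{j} w_{j} V_{j} C V_{j} \geq (\sum_{j} w_{j} V_{j}) C (\sum_{j} w_{j} V_{j})$ applied with $C$ a positive power of $X$. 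Once the reduced inequality $X^{(1-t)/2} \geq \sum_{j} w_{j} A_{j}^{(1-t)/(2z)}$ is settled, undoing the homogeneity reduction $\mathbb{A} \mapsto \lambda_{M} \mathbb{A}$ restores precisely the factor $\lambda_{M}^{-(1-t)(1-z)/(2z)}$ in front of the sum, completing the proof.
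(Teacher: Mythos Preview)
Your reduction by homogeneity to the case $A_{j}\le I$ (hence $X\le I$ by Lemma \ref{L:supplement}) is exactly right, and so is the final step of undoing the scaling. The gap you yourself flag, however, is real, and the fixes you suggest do not close it. Applying concavity of $y\mapsto y^{(1-t)/2}$ before Hansen's inequality forces the $A_{j}$-exponent to double to $2\alpha=(1-t)/z$, and since $A_{j}\le I$ this makes $\sum_{j}w_{j}A_{j}^{2\alpha}\le\sum_{j}w_{j}A_{j}^{\alpha}$, i.e.\ your intermediate estimate is strictly weaker than the target. The ``variance-type'' bound $\sum_{j}w_{j}V_{j}CV_{j}\ge(\sum_{j}w_{j}V_{j})C(\sum_{j}w_{j}V_{j})$ is valid, but applying it only gives $X^{(1-t)/2}\ge S\,X^{t(1-t)/2}\,S$ with $S=\sum_{j}w_{j}A_{j}^{\alpha}$; since $X^{t(1-t)/2}\le I$ this is again weaker than $X^{(1-t)/2}\ge S$, and iterating does not help because you would need a \emph{lower} bound on the sandwiched power of $X$.

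The paper avoids the exponent-doubling by reversing the order of operations: apply Hansen's inequality \eqref{E:Hansen2} with exponent $z$ \emph{first}, directly on the defining equation, to get
\[
X \;\ge\; \sum_{j} w_{j}\, A_{j}^{\alpha} X^{t} A_{j}^{\alpha}.
\]
Now conjugate by $X^{t/2}$ and observe that $X^{t/2}A_{j}^{\alpha}X^{t}A_{j}^{\alpha}X^{t/2}=(X^{t/2}A_{j}^{\alpha}X^{t/2})^{2}$, so operator convexity of the square gives
\[
X^{1+t}\;\ge\;\sum_{j}w_{j}\bigl(X^{t/2}A_{j}^{\alpha}X^{t/2}\bigr)^{2}\;\ge\;\Bigl(\sum_{j}w_{j}X^{t/2}A_{j}^{\alpha}X^{t/2}\Bigr)^{2}.
\]
Taking operator-monotone square roots and conjugating back by $X^{-t/2}$ yields $X^{(1-t)/2}\ge\sum_{j}w_{j}A_{j}^{\alpha}$ with the correct single $\alpha$-exponent. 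The key device you are missing is this congruence-then-square trick, which converts the awkward sandwich $A_{j}^{\alpha}X^{t}A_{j}^{\alpha}$ into a perfect square of a \emph{linear} expression in $A_{j}^{\alpha}$; no assumption $X\le I$ is even needed at this stage.
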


\begin{proof}
We first assume that $A_{j} \leq I$ for all $j$. Let $X = \mathcal{R}_{t,z}(\omega; \mathbb{A})$ for $0 < t \leq z < 1$.
By \eqref{E:Hansen2}
\begin{displaymath}
X = \sum_{j=1}^{n} w_{j} \left( A_{j}^{\frac{1-t}{2z}} X^{\frac{t}{z}} A_{j}^{\frac{1-t}{2z}} \right)^{z} \geq \sum_{j=1}^{n} w_{j} A_{j}^{\frac{1-t}{2z}} X^{t} A_{j}^{\frac{1-t}{2z}}.
\end{displaymath}
Taking the congruence transformation by $X^{\frac{t}{2}}$ and applying the convexity of a square map yield
\begin{displaymath}
X^{1+t} \geq \sum_{j=1}^{n} w_{j} \left( X^{\frac{t}{2}} A_{j}^{\frac{1-t}{2z}} X^{\frac{t}{2}} \right)^{2} \geq \left( \sum_{j=1}^{n} w_{j} X^{\frac{t}{2}} A_{j}^{\frac{1-t}{2z}} X^{\frac{t}{2}} \right)^{2}.
\end{displaymath}
Since the square root map is operator monotone, we have $\displaystyle X^{\frac{1+t}{2}} \geq \sum_{j=1}^{n} w_{j} X^{\frac{t}{2}} A_{j}^{\frac{1-t}{2z}} X^{\frac{t}{2}}$.
Taking the congruence transformation by $X^{-t/2}$ we obtain
\begin{equation} \label{E:claim1}
X^{\frac{1-t}{2}} \geq \sum_{j=1}^{n} w_{j} A_{j}^{\frac{1-t}{2z}}.
\end{equation}

Now, replacing $A_{j}$ by $\lambda_{M}^{-1} A_{j} (\leq I)$ for all $j$ in \eqref{E:claim1} we have
$$ \displaystyle \mathcal{R}_{t,z} \left( \omega; \lambda_{M}^{-1} \mathbb{A} \right)^{\frac{1-t}{2}} \geq \sum_{j=1}^{n} w_{j} \left( \lambda_{M}^{-1} A_{j} \right)^{\frac{1-t}{2z}}. $$
Since the R\'{e}nyi power mean $\mathcal{R}_{t,z}$ is homogeneous, it reduces to
\begin{displaymath}
\lambda_{M}^{\frac{t-1}{2}} \mathcal{R}_{t,z} \left( \omega; \mathbb{A} \right)^{\frac{1-t}{2}} \geq \lambda_{M}^{\frac{t-1}{2z}} \sum_{j=1}^{n} w_{j} A_{j}^{\frac{1-t}{2z}}.
\end{displaymath}
By simplifying the terms of $\lambda_{M}$ we obtain the desired inequality.
\end{proof}

\begin{remark}
The multi-variable matrix mean on the open convex cone $\mathbb{P}_{m}$ can be defined as a map $G: \Delta_{n} \times \mathbb{P}_{m}^{n} \to \mathbb{P}_{m}$ satisfying the idempotency: $G(\omega; A, \dots, A) = A$ for any $\omega \in \Delta_{n}$ and $A \in \mathbb{P}_{m}$.
Boundedness of the multi-variable matrix mean plays an important role in operator inequality and majorization. Especially, the multi-variable matrix mean $G$ satisfying the arithmetic-$G$-harmonic mean inequalities
\begin{displaymath}
\left( \sum_{j=1}^{n} w_{j} A_{j}^{-1} \right)^{-1} \leq G(\omega; A_{1}, \dots, A_{n}) \leq \sum_{j=1}^{n} w_{j} A_{j}
\end{displaymath}
fulfills the extended version of Lie-Trotter formula \cite{HK17}:
\begin{equation} \label{E:Lie-Trotter}
\lim_{s \to 0} G(\omega; A_{1}^{s}, \dots, A_{n}^{s})^{1/s} = \exp \left( \sum_{j=1}^{n} w_{j} \log A_{j} \right).
\end{equation}
See \cite{HK19, Kim} for more information. We here have established boundedness of the R\'{e}nyi power mean, but it is still open whether \eqref{E:Lie-Trotter} holds for the R\'{e}nyi power mean.
\end{remark}

\vspace{1cm}
\textbf{Acknowledgement}

No potential competing interest was reported by authors. The work of S. Kim was supported by the National Research Foundation of Korea (NRF) grant funded by the Korea government (MSIT) (No. NRF-2022R1A2C4001306).

%%%%%%%%%%%%%%%%%%%%%%%%%%%%%%%%%%%%%%%%%%%%%%%%%%


\begin{thebibliography}{99}

\bibitem{AC}
M. Agueh and G. Carlier, \textit{Barycenters in the Wasserstein space}, SIAM J. Math. Anal. Appl. \textbf{43} (2011), 904-924.

% \bibitem{AKL}
% E. Ahn, S. Kim, and Y. Lim, \textit{An extended Lie-Trotter formula and its applications}, Linear Algebra. Appl. \textbf{427} (2007), 190-196.

\bibitem{ABCM}
P. C. \'{A}lvarez-Esteban, E. del Barrio, J. A. Cuesta-Albertos, and C. Mat\'{r}an, \textit{A fixed point approach to barycenters in Wasserstein space}, J. Math. Anal. Appl. \textbf{441} (2016), 744-762.

\bibitem{An94}
T. Ando, \textit{Majorizations and inequalities in matrix theory}, Linear Algbera Appl. \textbf{199} (1994), 17-67.

\bibitem{AH}
T. Ando and F. Hiai, \textit{Log majorization and complementary Golden-Thompson type inequalities}, Linear Algebra Appl. \textbf{197/198} (1994), 113-131.

\bibitem{AD}
K. Audenaert and N. Datta, \textit{$\alpha$-z-Renyi relative entropies}, J. Math. Phys. \textbf{56} (2015), 022202.

\bibitem{Bh97}
R. Bhatia, \textit{Matrix Analysis}, Vol. 169 of Graduate Texts in Mathematics, Springer-Verlag, New York, 1997.

\bibitem{BLY}
R. Bhatia, Y. Lim, and T. Yamazaki, \textit{Some norm inequalities for matrix means}, Linear Algebra Appl. \textbf{501} (2016), 112-122.

\bibitem{BJL19}
R. Bhatia, T. Jain, and Y. Lim, \textit{Inequalities for the Wasserstein mean of positive definite matrices}, Linear Algebra. Appl. \textbf{576} (2019), 108-123.

\bibitem{DDF}
T.-H. Dinh, R. Dumitru, and J. A. Franco, \textit{On a conjecture of Bhatia, Lim and Yamazaki}, Linear Algebra Appl. \textbf{532} (2017), 140-145.

\bibitem{DLVV}
T.-H. Dinh, C.-T. Le, B.-K. Vo and T.-D. Vuong, \textit{The $\alpha-z$-Bures Wasserstein divergence}, Linear Algebra Appl. \textbf{624} (2021), 267-280.

\bibitem{DLV}
T.-H. Dinh, C.-T. Le and T.-D. Vuong, \textit{$\alpha-z$-fidelity and $\alpha-z$-weighted right mean}, submitted (2023), DOI: 10.13140/RG.2.2.24952.72960.

\bibitem{DF}
R. Dumitru and J. A. Franco, \textit{The R\'{e}nyi power means of matrices}, Linear Algebra Appl. \textbf{607} (2020), 45-57.

% \bibitem{FP}
% M. Fiedler and V. Pt\'{a}k, \textit{A new positive definite geometric mean of two positive definite matrices}, Linear Algebra Appl. \textbf{251} (1997), 1-20.

% \bibitem{GK}
% L. Gan and S. Kim, \textit{Revisit on spectral geometric mean}, Linear and Multilinear Algebra, published online.

% \bibitem{GLT21}
% L. Gan, X. Liu, and T.-Y. Tam, \textit{On two geometric means and sum of adjoint orbits}, Linear Algebra Appl. \textbf{631} (2021), 156-173.

% \bibitem{GT22}
% L. Gan and T.-Y. Tam, \textit{Inequalities and limits of weighted spectral geometric mean}, Linear and Multilinear Algebra, published online, DOI: 10.1080/03081087.2022.2158294.

\bibitem{HP}
F. Hansen and G. K. Pedersen, \textit{Jensen’s inequality for operators and Loewner’s theorem}, Math. Ann. \textbf{258} (1982), 229–241.

\bibitem{Hi}
F. Hiai, \textit{Log-majorization and norm inequalities for exponential operators}. Linear operators (Warsaw, 1994), 119-181, Banach Center Publ. \textbf{38}, Polish Acad. Sci. Inst. Math., Warsaw, 1997.

% \bibitem{HKT}
% F. Hiai, R. K\"{o}nig and M. Tomamichel, \textit{Generalized log-majorization and multivariate trace inequalities}, Ann. Henri Poincar\'{e} \textbf{18} (2017), no. 7, 2499-2521.

% \bibitem{HP12}
% F. Hiai and D. Petz, \textit{Riemannian metrics on positive definite matrices related to means. {II}}, Linear Algebra Appl. \textbf{436} (2012), No. 7, 2117-2136.

%\bibitem{Ho}
%J. Holbrook, \textit{No dice: a deterministic approach to the Cartan centroid}, J. Ramanujan Math. Soc. \textbf{27} (2012), 509-521.

\bibitem{HJK21}
J. Hwang, M. Jeong and S. Kim, \textit{Right R\'{e}nyi mean and tensor product}, J. Appl. Math. Informatics, Vol. \textbf{39} (2021), No. 5-6, 751-760.

\bibitem{HK17}
J. Hwang and S. Kim, \textit{Lie-Trotter means of positive definite operators}, Linear Algbera Appl. \textbf{531} (2017), 268-280.

\bibitem{HK19}
J. Hwang and S. Kim, \textit{Bounds for the Wasserstein mean with applications to the Lie-Trotter mean}, J. Math. Anal. Appl. \textbf{475} (2019), 1744-1753.

\bibitem{JHK}
M. Jeong, J. Hwang and S. Kim, \textit{Right mean for the $\alpha-z$ Bures-Wasserstein quantum divergence}, Acta Math. Sci., to be published (2023).

% \bibitem{Kim21}
% S. Kim, \textit{Operator inequalities and gyrolines of the weighted geometric means}, Math. Inequal. Appl., \textbf{24(2)} (2021), 491-514.

\bibitem{Kim18}
S. Kim, \textit{The quasi-arithmetic means and Cartan barycenters of compactly supported measures}, Forum Math. \textbf{30} (2018), no. 3, 753-765.

\bibitem{Kim}
S. Kim, \textit{Parameterized Wasserstein means}, J. Math. Anal. Appl. \textbf{525} (2023), 127272.

\bibitem{LL14}
J. Lawson and Y. Lim, \textit{Karcher means and Karcher equations of positive definite operators}, Trans. Amer. Math. Soc. Series B \textbf{1} (2014), 1-22.

% \bibitem{LeeL}
% H. Lee and Y. Lim, \textit{Metric and spectral geometric means on symmetric cones}, Kyungpook Math. J. \textbf{47} (2007), 133-150.

\bibitem{LP12}
Y. Lim and M. P\'{a}lfia, \textit{Matrix power means and the Karcher mean}, J. Funct. Anal. \textbf{262} (2012), 1498-1514.

%\bibitem{LP14}
%Y. Lim and M. P\'{a}lfia, \textit{Weighted deterministic walks and no dice approach for the least squares mean on Hadamard spaces}, Bull. Lon. Math. Soc. \textbf{46} (2014), 561-570.

\bibitem{LY13}
Y. Lim and T. Yamazaki, \textit{On some inequalities for the matrix power and Karcher means}, Linear Algebra Appl. \textbf{438} (2013), 1293-1304.

\bibitem{MO}
M. Mosonyi and T. Ogawa, \textit{Divergence radii and the strong converse exponent of classical-quantum channel coding with constant compositions}, IEEE Trans. Information Theory, Vol. \textbf{67}, No. 3 (2021), 1668-1698.

%\bibitem{ST94}
%M. Sagae and K. Tanabe, \textit{Upper and lower bounds for the arithmetic–geometric–harmonic means of positive definite matrices}, Linear and Multilinear Algebras \textbf{37} (1994), 279–282.

\bibitem{WWY}
M. Wilde, A. Winter and D. Yang, \textit{Strong converse for the classical capacity of entanglement-breaking and Hadamard channels via a sandwiched Renyi relative entropy}, Comm. Math. Phys. \textbf{331} (2014), 593-622.

\bibitem{Ya12}
T. Yamazaki, \textit{The Riemannian mean and matrix inequalities related to the Ando-Hiai inequality and chaotic order}, Oper. Matrices \textbf{6} (2012), 577-588.

\end{thebibliography}
\end{document}